\DeclareFontFamily{OT1}{rsfs}{}
\DeclareFontShape{OT1}{rsfs}{n}{it}{<-> rsfs10}{}
\DeclareMathAlphabet{\mathscr}{OT1}{rsfs}{n}{it}
\newtheorem{theorem}{Theorem}[section]
\newtheorem{lemma}[theorem]{Lemma}
\newtheorem{corol}[theorem]{Corollary}
\theoremstyle{definition} }
\theoremstyle{remark} \newtheorem{remark}[theorem]{Remark}
\newtheorem{example}[theorem]{Example}}
\newcommand{\Pbb}{{\mathbb{P}}}
\newcommand{\Rbb}{{\mathbb{R}}}
\newcommand{\Zbb}{{\mathbb{Z}}}
\newcommand{\cL}{{\mathscr L}}
\newcommand{\ua}{\underline a}
\newcommand{\ue}{\underline e}
\newcommand{\um}{\underline m}
\newcommand{\uv}{\underline v}
\newcommand{\uX}{\underline X}
\DeclareMathOperator{\rk}{rk}
\DeclareMathOperator{\im}{im}
\DeclareMathOperator{\Vol}{Vol}
\DeclareMathOperator{\hVol}{\widehat \Vol}
\newcommand{\qede}{\hfill$\lrcorner$}
\title{
Multidegrees of monomial rational maps
}
\author{Paolo Aluffi}
\address{
Mathematics Department, 
Florida State University,
Tallahassee FL 32306, U.S.A.
}
\email{aluffi@math.fsu.edu}
\begin{document}

\begin{abstract}
We prove a formula for the multidegrees of a rational map defined by generalized 
monomials on a projective variety, in terms of integrals over an associated Newton
region. This formula leads to an expression of the multidegrees as volumes of
related polytopes, in the spirit of the classical Bernstein-Kouchnirenko theorem,
but extending the scope of these formulas to more general monomial maps.
We also determine a condition under which the multidegrees may be computed
in terms of the characteristic polynomial of an associated matrix.
\end{abstract}

\maketitle

%%%

\section{Introduction}\label{intro}

\subsection{}
Let $V\subseteq \Pbb^r$ be a projective variety, and $\varphi: V \dashrightarrow \Pbb^N$
a rational map. The {\em multidegrees\/} $\gamma_\ell$ of $\varphi$ are the coefficients of 
the class of the (closure of the) graph $\Gamma$ of this map in $\Pbb^r\times \Pbb^N$, 
to wit
\[
\gamma_\ell = h^{\dim V-\ell}\cdot H^\ell\cdot \Gamma\quad,
\]
where $h$, resp., $H$ is the pull-back of the hyperplane class in $\Pbb^r$, resp., 
$\Pbb^N$. The numbers $\gamma_\ell$ are obviously significant: for example,
$\gamma_r=0$ if and only if the general fiber of $\varphi$ is positive dimensional;
and if the general fiber consists of $D$ reduced points, then $\gamma_r$
is the product of $D$ and $\deg (\overline{\im \varphi})$. When $V=\Pbb^r$ and
$N=r$, $\varphi$ is a Cremona transformation if and only if $\gamma_r=1$. 
In general, $\gamma_0=\deg V$ and $\gamma_i=0$ if and only if 
$i>\dim(\im\varphi)$; if $V=\Pbb^r$, and for $i < \dim(\im\varphi)$, $\gamma_i$~may be 
interpreted as the degree of the closure of the image of a general $\Pbb^i\subseteq \Pbb^r$. 
We assemble the multidegrees into a polynomial
\[
\gamma_\varphi(t)=\gamma_0+\gamma_1 t+\gamma_2 t^2+\cdots
\]
of degree $\dim \im \varphi$. This polynomial does not depend on the dimension $r$ of 
the space containing $V$; it does depend on the hyperplane class of the embedding 
$V\hookrightarrow \Pbb^r$. We can in fact define a {\em multidegree class\/} 
(\S\ref{sec:multclass}) on $V$ as the push-forward to $V$ of $(\sum_{\ell\ge 0}H^\ell)
\cap [\Gamma]$; this is independent of any projective embedding of $V$, and our results 
will in fact deal with this class. In this introduction we will state the results for the 
multidegree polynomial, to remain closer to the more classical notion.

We consider rational maps $\varphi$ whose components are monomials $\mu_0,\dots, 
\mu_N$ in sections $s_j$ of line bundles $\cL_j$, $j=1,\dots,n$ on $V$, of course subject 
to the condition that all monomials are sections of the same line bundle $\cL$. For example,
$\varphi$ could be the restriction to $V$ of a rational map $\Pbb^r \dashrightarrow \Pbb^N$
defined by isobaric monomials in a collection of homogeneous polynomials. The hypersurfaces 
$X_j$ defined by $s_j$ on $V$ are required to satisfy a weak transversality hypothesis,
explained in \S\ref{monomialsetup}. For simplicity, the reader may assume that $V$ is
nonsingular and the $X_j$ form a simple normal crossing divisor, but less is needed:
see~\S\ref{monomialsetup} for an example of a much weaker sufficient condition.

We now state the result. 
The monomials $\mu_i = s_1^{m_{i1}}\cdots s_n^{m_{in}}$ determine lattice points 
$(m_{i1},\dots, m_{in})$ in $\Zbb^n\subseteq \Rbb^n$ (with coordinates $(a_1,\dots,
a_n)$). We move this set of points so
that one of them is at the origin, by setting $m'_{ij}=m_{ij}-m_{Nj}$ for $j=0,\dots, N$.
Notice that the lattice points $\um'_i=(m'_{i1},\dots, m'_{in})$ all lie on the subspace 
$d_1 a_1+\cdots + d_n a_n=0$, where $d_j=h^{\dim V-1}\cdot X_j$ is the degree 
of $X_j$ viewed as an algebraic set in $\Pbb^r$.

We denote by $N_\varphi$ the convex hull of the positive orthants translated at the lattice 
points~$\um'_i$; we call $N_\varphi$ the {\em Newton outer region\/} of $\varphi$.

\begin{theorem}\label{main1}
\begin{equation}\label{eq:main1}
\gamma_\varphi(t)=\int_{N_\varphi} \frac{n! X_1\cdots X_n\, t^n h^{\dim V+1}\, da_1\cdots da_n}
{(h+(a_1 X_1+\cdots +a_n X_n)t)^{n+1}}\quad.
\end{equation}
\end{theorem}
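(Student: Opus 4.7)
My plan is to reduce the computation to intersection theory on a resolution of indeterminacy of $\varphi$, and then to recognize the outcome as the integral over $N_\varphi$.

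First, pass to a proper birational morphism $\pi: \widetilde V \to V$ that resolves the base scheme of $\varphi$, so that $\widetilde\varphi := \varphi\circ\pi$ is a morphism. Under the weak transversality hypothesis, such a $\pi$ can be realized as a sequence of blow-ups along intersections of (proper transforms of) the $X_j$, with the combinatorics of the exceptional divisors dictated by a polyhedral subdivision of $N_\varphi$. On $\widetilde V$ one has $\widetilde\varphi^* H = \pi^* c_1(\cL) - D$ for an effective divisor $D$ supported on the proper transforms of the $X_j$ and the exceptional divisors, whose multiplicities are determined by the vertices $\um'_i$ and the lower envelope of $N_\varphi$.

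Express the multidegrees as intersection numbers on $\widetilde V$, assembled into the generating series
\[
\gamma_\varphi(t) \;=\; \sum_{\ell} (\pi^* h)^{\dim V-\ell} \cdot (\widetilde\varphi^* H)^\ell \cdot t^\ell \cap [\widetilde V].
\]
Pushing forward by $\pi$ and using the projection formula rewrites $\gamma_\varphi(t)$ as a class on $V$ built from $h$, $c_1(\cL)$, and monomial products of the classes of the $X_j$, organized by the cells of the Newton subdivision.

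To identify this expression with the integral in~(\ref{eq:main1}), the substitution $b_i = a_i X_i t/h$ converts the integrand into $n!\,(1+\sum_i b_i)^{-n-1}\, h^{\dim V}\, db_1\cdots db_n$. By a standard Dirichlet computation, the integral of this form over the translated orthant $\{a_j \ge m'_{ij}\}$ evaluates to $h^{\dim V+1}/(h + t\sum_j m'_{ij} X_j)$, which one recognizes as precisely the local contribution of the $i$-th monomial $\mu_i$ to the intersection-theoretic sum above. Decomposing $N_\varphi = \mathrm{conv}\{\um'_i\} + \Rbb^n_{\ge 0}$ into cells matched with the exceptional divisors of $\pi$ and combining the translated-orthant integrals by inclusion-exclusion then assembles these local contributions into the full integral over $N_\varphi$.

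The main obstacle will be this last matching step. One must verify that the polyhedral structure of the Newton subdivision corresponds exactly to the combinatorics of the exceptional locus of $\pi$ (this is where the weak transversality hypothesis enters crucially, so that monomial products $X_{j_1}\cdots X_{j_k}$ define the intersection classes the integrand demands), and that the signed sum on the polyhedral side reconstructs precisely the combinatorial pushforward coming from $\pi$. This careful bookkeeping, rather than any single isolated calculation, is where the real technical weight of the proof sits; as an alternative I would try to circumvent it by phrasing the identity through a Segre-class formula for the monomial ideal generated by the $\mu_i$ and invoking earlier integral expressions for Segre classes of monomial schemes.
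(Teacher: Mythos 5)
Your main route---resolving the indeterminacy by blow-ups, writing $\widetilde\varphi^* H = \pi^*c_1(\cL) - D$, and then matching the exceptional combinatorics with a polyhedral subdivision of $N_\varphi$---is genuinely different from the paper's, and it correctly identifies where the technical weight would land. The paper avoids all of this bookkeeping by being modular: it cites the known relation $G = c(\cL^*)^{-1}(([V] - i_*s(S,V))\otimes_V \cL^*)$ between the multidegree class and the Segre class of the base scheme (Lemma~\ref{multclass}), and then cites the main theorem of~\cite{Scaiop}, which already packages the resolution-and-matching work into the statement that $[V]-i_*s(S,V)$ is $\int_{N^c} n!\,X_1\cdots X_n\,da/(1+\sum a_jX_j)^{n+1}$. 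After that, the paper's proof is only a formal $\otimes_V\cL^*$ twist, a translation of the integration region from $N^c$ to $N_\varphi$, and the substitution tracking codimension versus degree. Your ``alternative'' suggestion at the end---phrasing the identity through a Segre-class formula for the monomial ideal and invoking earlier integral expressions for Segre classes---is precisely this argument, so you did land on the paper's route as a fallback.

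One concrete caution about your main route: the step ``combining the translated-orthant integrals by inclusion-exclusion'' would not by itself produce the integral over $N_\varphi$, because $N_\varphi$ is the \emph{convex hull} of the translated orthants (equivalently the Minkowski sum $\mathrm{conv}\{\um'_i\} + \Rbb^n_{\ge 0}$), which is strictly larger than their union whenever there are two or more distinct $\um'_i$. The extra cells filling in the hull correspond to further exceptional divisors introduced during the blow-up sequence, and their multiplicities are exactly what makes the matching delicate; a naive inclusion-exclusion over the orthants $\um'_i + \Rbb^n_{\ge 0}$ alone misses these. You flag this region-versus-resolution matching as the hard part, which is the right instinct, but it is worth noting that the difficulty is already present at the level of which set you are integrating over, not only in how the cells correspond to divisors. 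Carrying this out from scratch amounts to reproving the integral formula for Segre classes of monomial schemes, which is why the paper simply cites~\cite{Scaiop} rather than rederiving it.
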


\begin{remark}
(i) The integral should be interpreted as follows. Perform the integral with $X_j$, $h$, and $t$
as parameters; the result is a rational function in these parameters. Replacing the 
parameters $X_j$ by the classes of the corresponding divisors in $V$ and $h$ by the
restriction of the hyperplane class from $\Pbb^r$ gives a 
{\em polynomial\/} in $t$; the coefficient of $t^\ell$ in this polynomial is a 
homogeneous polynomial of degree $\dim V$ in the classes $X_j$ and $h$. The statement
is that replacing the terms $h^{\dim V-\ell}\cdot X_{j_1}\cdots X_{j_\ell}$ in this polynomial 
with the corresponding intersection numbers determines the $\ell$-th multidegree~$\gamma_\ell$.

(ii) The convex region $N_\varphi$ depends on the choice of the pivoting monomial. It is a
consequence of the theorem that this choice does not affect the result of evaluating the
integral as specified in (i).
\qede\end{remark}

\begin{example}\label{tria}
Let $F_1,F_2,F_3$ be general homogeneous polynomials in $x_0$, $x_1$, $x_2$ of 
degrees $1$, $2$, $3$ respectively. Consider the rational map $\varphi: \Pbb^2 \dashrightarrow 
\Pbb^2$ given in components by $(x_0,x_1,x_2) \mapsto (F_2 F_3^2, F_1^2F_3^2, 
F_1^3 F_2 F_3)$.
According to Theorem~\ref{main1}, the multidegrees of $\varphi$ are the coefficients 
of $t^\ell$ in
\[
\int_{N_\varphi} \frac{n! X_1\cdots X_n\, t^n\, h^{\dim V+1}\,da_1\cdots da_n}
{(h+(a_1 X_1+\cdots +a_n X_n)t)^{n+1}}\quad,
\]
where $n=3$, $\dim V=2$, $X_1$, $X_2$, $X_3$ are the curves $F_1=0$, $F_2=0$, 
$F_3=0$, respectively, and $N_\varphi$ is the Newton outer region determined by the lattice points 
$(0,1,2)$, $(2,0,2)$, $(3,1,1)$ translated back to $A=(-3,0,1), B=(-1,-1,1), C=(0,0,0)$. 
That is, $N_\varphi$ is the region in $\Rbb^3$ extending from the triangle $ABC$ 
towards the three positive coordinate directions.
\begin{center}
\includegraphics[scale=.5]{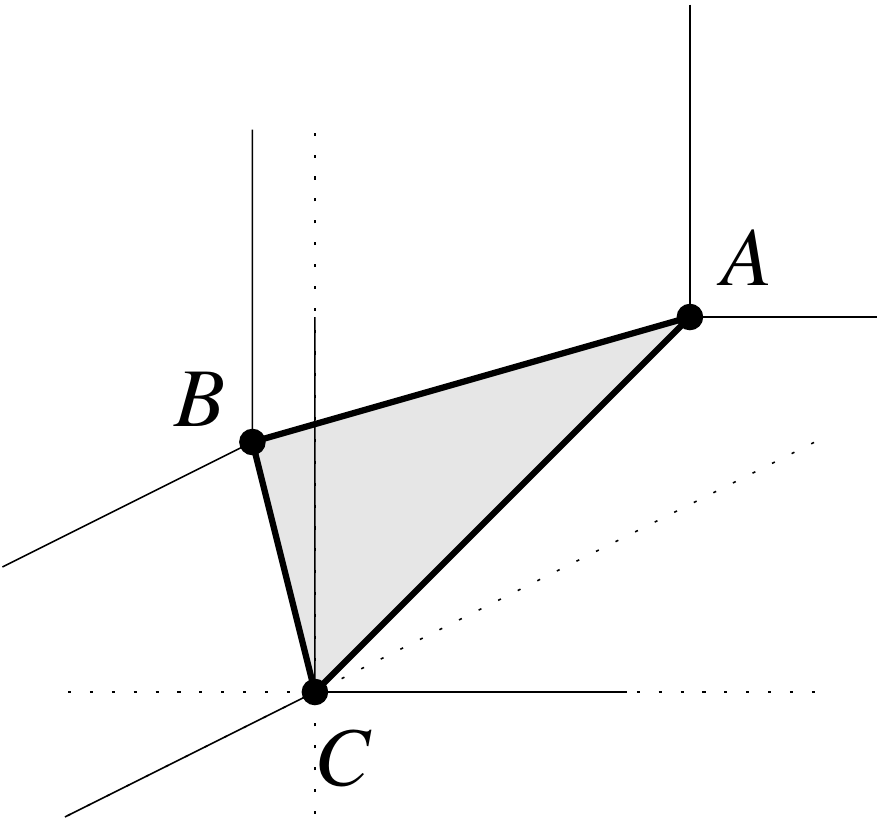}
\end{center}
The reader will easily verify that
\begin{multline*}
\int_{N_\varphi} \frac{6 X_1 X_2 X_3\, t^3\, h^3\, da_1 da_2 da_3}
{(h+(a_1 X_1+a_2 X_2 +a_3 X_3)t)^4} \\
=\frac{h^2(h^2+(-4X_1 +3X_3)ht+
(3X_1^2+3X_1X_2-5X_1X_3-X_2X_3+2X_3^2)\, t^2)}
{(h+(-3X_1+X_3)t)(h+(-X_1-X_2+X_3)t)}\quad.
\end{multline*}
As promised, the denominator disappears (canceling the extra factor at numerator)
after setting $X_1=h$, $X_2=2h$, $X_3=3h$; we get (as $h^2=1$)
\[
\gamma_\varphi(t)=1+5t+6t^2\quad,
\]
i.e., $\gamma_0=1$, $\gamma_1=5$, $\gamma_2=6$. This says that $\varphi$ is
generically $6$-to-$1$. Since the class of the monomials was $8h$ to begin with, 
we see that the base locus of $\varphi$ contributes $58$ to the intersection number
$(8h)^2$. 
\qede\end{example}

The integral appearing in Theorem~\ref{main1} may be computed from a decomposition
of the region $N_\varphi$ into simplices, including the positive coordinate directions
as possible vertices (at infinity). We will use notation as in \S2.2 of \cite{Scaiop}: in
particular, if a simplex $T$ has $s+1$ finite vertices, its {\em rank\/} is $\rk T=s$; and
the {\em volume\/} of $T$ is the normalized volume of the projection along its infinite 
directions. Also, we let $\deg T$ denote $h^{\dim V-\rk T} \cdot \prod X_j$, where
the (intersection) product is taken over the complement of the infinite directions in~$T$.
We denote by $\ua$ the vertex at infinity in the positive direction of the coordinate $a$.

\begin{theorem}\label{main2}
\begin{equation}\label{eq:main2}
\gamma_\varphi(t)=\sum_T \hVol(T)\,\deg(T)\, t^{\rk T}
\end{equation}
where the sum ranges over the full-dimensional simplices $T$ in a triangulation of 
$N_\varphi$, with vertices chosen among vertices of $N_\varphi$ and infinite coordinate 
vertices.
\end{theorem}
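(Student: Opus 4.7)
My plan is to apply Theorem~\ref{main1}, split the integral over $N_\varphi$ along the given triangulation, evaluate the resulting integral on each simplex $T$ explicitly, and let the rational expression collapse to $\hVol(T)\deg(T)t^{\rk T}$ by invoking the intersection-theoretic relation forced by the $\mu_i$ all being sections of the same $\cL$.

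Fix a full-dimensional simplex $T$ of the triangulation with finite vertices $v_0,\dots,v_s$ (so $\rk T=s$) and vertices at infinity in the positive directions of the coordinates $a_{i_1},\dots,a_{i_{n-s}}$. Denote by $e_1,\dots,e_n\in\Rbb^n$ the standard basis and parameterize $T$ by $(u_1,\dots,u_s,c_1,\dots,c_{n-s})$ via $a=\sum_{k=0}^s u_k v_k+\sum_{j=1}^{n-s}c_j\, e_{i_j}$, with $u_0=1-\sum_{k\ge 1}u_k\ge 0$ and $c_j\ge 0$. The Jacobian is the absolute value of the determinant of the matrix whose columns are $v_1-v_0,\dots,v_s-v_0,e_{i_1},\dots,e_{i_{n-s}}$; expanding along the unit-vector columns, this equals the normalized volume of the projection of $T$ along its infinite directions, i.e.\ $\hVol(T)$. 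Setting $y_k:=h+(v_k\cdot X)t$ with $v_k\cdot X:=\sum_i v_{k,i}X_i$, the denominator of the integrand in~\eqref{eq:main1} becomes $\sum_k u_k y_k+\sum_j c_j X_{i_j}t$. Iterating the one-dimensional identity
\[
\int_0^\infty\frac{dc_j}{(b+c_j X_{i_j}t)^{p+1}}=\frac{1}{p\,X_{i_j}t\,b^p}
\]
on the infinite coordinates drops the denominator exponent to $s+1$ and cancels the factors $n(n-1)\cdots(s+1)$, $t^{n-s}$ and $\prod_j X_{i_j}$ present in~\eqref{eq:main1}. The remaining $u$-integral is the classical Cauchy/Dirichlet identity $\int_{\Delta^s}s!\,du_1\cdots du_s/(\sum_k u_k y_k)^{s+1}=1/\prod_k y_k$, which follows at once from $s!/y^{s+1}=\int_0^\infty x^s e^{-xy}\,dx$ and Fubini. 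Assembling,
\[
I_T=\frac{\hVol(T)\,\prod_{i\notin\{i_1,\dots,i_{n-s}\}}X_i\cdot t^s\,h^{\dim V+1}}{\prod_{k=0}^s(h+(v_k\cdot X)t)}.
\]

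The finite vertices $v_k$ of the triangulation are by hypothesis vertices of $N_\varphi$, hence translated exponent vectors $\um'_i$; and since every monomial $\mu_i$ is a section of the same line bundle $\cL$, one has $\sum_j m'_{ij}[X_j]=[\cL]-[\cL]=0$ in $\mathrm{Pic}(V)$, i.e.\ $v_k\cdot X=0$ as a divisor class on $V$. Writing $I_T=\hVol(T)\,h^{\dim V-s}\prod_{i\notin\{i_1,\dots,i_{n-s}\}}X_i\cdot t^s\cdot\prod_k(1+(v_k\cdot X)t/h)^{-1}$ and expanding each factor as a formal power series in $t$, every term past the leading constant involves some $v_k\cdot X$ and therefore vanishes under the substitution prescribed in Theorem~\ref{main1}(i); one is left with $I_T=\hVol(T)\deg(T)\,t^{\rk T}$. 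Summing over the simplices of the triangulation gives~\eqref{eq:main2}. The principal obstacle is the careful bookkeeping in the change of variables, the iterated $c$-integration and the Cauchy/Dirichlet identity on the unit simplex; once the Chow-ring relation $v_k\cdot X=0$ is in hand, the final collapse of the rational expression is essentially formal.
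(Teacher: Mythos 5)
Your proof is correct and follows essentially the same route as the paper: triangulate $N_\varphi$, integrate over each generalized simplex to get a rational expression with denominator $\prod_k\bigl(h+(v_k\cdot X)t\bigr)$, and then use the fact that all $\mu_i$ are sections of the same $\cL$ to deduce $v_k\cdot X=0$ in the Chow ring, collapsing each contribution to $\hVol(T)\deg(T)\,t^{\rk T}$. The only difference is that you re-derive the simplex integration formula by an explicit change of variables, iterated $c$-integration, and the Cauchy/Dirichlet identity, whereas the paper simply cites it as Lemma~\ref{simplexco} (Lemma~2.5 of~\cite{Scaiop}); the structure and the key intersection-theoretic step are identical.
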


\begin{example}\label{tria2}
For the case illustrated in Example~\ref{tria}, a triangulation of $N_\varphi$ consists of
the simplices $T_1=C\ua_1\ua_2\ua_3$, $T_2=AC\ua_2\ua_3$, $T_3=AB\ua_1\ua_3$, 
$T_4=ABC\ua_1$. 
We have $\rk T_1=0$; $\rk T_2=\rk T_3=1$; $\rk T_4=2$; $\hVol(T_1)=1$, 
$\hVol(T_2)=3$, $\hVol(T_3)=\hVol(T_4)=1$ (as e.g., the projection of $T_4$ on the 
$a_2a_3$ plane is the triangle with vertices $(1,2)$, $(0,2)$, $(1,1)$); and 
$\deg T_1=h^{2-0}=1$, $\deg T_2=h^{2-1}\cdot X_1=1$, $\deg T_3=h^{2-1}\cdot X_2=2$, 
$\deg T_2=h^{2-2}\cdot X_2\cdot X_3 = 6$. According to Theorem~\ref{main2},
\[
\gamma_\varphi(t)=1+3\cdot 1\, t + 1\cdot 2\, t + 1\cdot 6\, t^2\quad,
\]
agreeing with the previous computation.
\qede\end{example}

For $X=\Pbb^r$ and $X_j=$ coordinate hyperplanes, the multidegrees of rational monomial 
maps are computed via toric methods by {\em mixed volumes\/} of Minkowsky sums of 
polytopes (see {\em e.g.,\/} \cite{MR2221122}, \S4, or \cite{Dolgachev}, \S3.5). In particular, 
the top degree may be expressed as the ordinary (normalized) euclidean volume of a convex 
polytope; this is a simple instance of the Bernstein-Kouchnirenko theorem. The leading coefficient 
of \eqref{eq:main2} reproduces this result in this case, and extends it to the more general 
monomial maps on projective varieties considered here, where toric techniques would not 
seem to be immediately 
applicable. Even in the particular case $X=\Pbb^r$, $X_j=$ coordinate hyperplanes,
it would be interesting to understand more fully why the ordinary volumes appearing in
Theorem~\ref{main2} compute the mixed volumes of the more classical formula.

In the case $N=r=n-1$, $N_\varphi$ has one finite face which (if non-degenerate) is an 
$(n-1)$-simplex in $\Rbb^n$. The map $\varphi$ is determined by the $n\times n$ 
matrix $M'_\varphi=(m'_{ij})$ whose rows $\um'_i$ consist of the translated lattice points, 
as above. (So one row of $M'_\varphi$ is $0$.) We say that $\varphi$ is 
{\em well-presented\/} if the following requirement on $M'_\varphi$ is
satisfied. Every choice of a set $I$ of indices $i_1,\dots,i_\ell$ determines 
a projection $\Rbb^n \to \Rbb^{n-\ell}$ along the coordinate directions $a_{i_1},\dots, 
a_{i_\ell}$. We require that the projection of the Newton outer region of $\varphi$ be
the Newton outer region determined by the projections of the rows $\um'_k$ for $k\notin I$. 
(We also require a condition on the signs of certain minors of $M_\varphi$; see
\S\ref{wp}.) For example, the standard Cremona transformation $(x_1:\dots: x_n) 
\mapsto (\frac 1{x_1}:\dots: \frac 1{x_n})$ trivially satisfies this condition. 

We prove that if $\varphi$ is well-presented, then the multidegree polynomial of 
$\varphi$ may be computed directly from the characteristic polynomial of the matrix 
$M'_\varphi$. The precise statement 
in the generality considered here is given in~Theorem~\ref{charpolmd};
for ordinary monomial rational maps $\varphi:\Pbb^{n-1}\dashrightarrow \Pbb^{n-1}$,
the result may be stated as follows. Let
\[
\alpha:\quad (x_1,\dots, x_{n-1}) \mapsto (x_1^{a_{11}} \cdots x_{n-1}^{a_{1, n-1}} , \dots,
x_1^{a_{n-1,1}} \cdots x_{n-1}^{a_{n-1, n-1}})
\]
be a morphism of tori, with $a_{ij}\in \Zbb$, inducing a rational map $\varphi:\Pbb^{n-1}
\dashrightarrow \Pbb^{n-1}$. Let $A=(a_{ij})$ be the matrix of exponents of $\alpha$, with 
characteristic polynomial $P_A(t)=\det(t\, I-A)$.

\begin{theorem}\label{pcchar}
If the rational map $\varphi:\Pbb^{n-1} \dashrightarrow \Pbb^{n-1}$ is well-presented, then
$\gamma_\varphi(t) = t^{n-1} P_A\left(\frac 1t\right)$.
\end{theorem}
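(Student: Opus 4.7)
The plan is to derive Theorem~\ref{pcchar} as a specialization of Theorem~\ref{main2} to $V = \Pbb^{n-1}$ with the $X_j$'s equal to the coordinate hyperplanes. With these choices every $X_j$ has class $h$ and $\deg V = 1$, so $\deg T = h^{n-1} = 1$ for every simplex $T$; Theorem~\ref{main2} collapses to
\[
\gamma_\varphi(t) = \sum_T \hVol(T)\, t^{\rk T},
\]
and the task becomes that of matching this sum with $t^{n-1}P_A(1/t) = \det(I-tA)$.

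The first substantive step is the construction of a triangulation of $N_\varphi$ adapted to the well-presented hypothesis. I would index candidate full-dimensional simplices $T_S$ by subsets $S$ of the non-pivot row indices of $M'_\varphi$: $T_S$ has $|S|+1$ finite vertices --- namely $\um'_N$ together with $\{\um'_i : i\in S\}$ --- and $n-|S|$ vertices at infinity, chosen to be the $\ua_j$ for $j$ outside the image of $S$ under the natural row--coordinate bijection that the well-presented hypothesis makes available. The condition that every coordinate projection of $N_\varphi$ coincides with the Newton outer region built from the projected complementary rows is precisely what is needed to ensure that these simplices are nondegenerate, have rank $|S|$, and fit together into an honest triangulation of $N_\varphi$.

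The second step is the volume computation: projecting $T_S$ along its infinite directions collapses it to an $|S|$-dimensional simplex with vertices the origin and the projected rows $\um'_i$ for $i\in S$, whose normalized volume is the absolute value of the determinant of the corresponding principal $|S|\times|S|$ submatrix of an essential $(n-1)\times(n-1)$ block of $M'_\varphi$. This block can be compared with $A$ by a short linear-algebra manipulation: because the pivot row of $M'_\varphi$ vanishes and the column ``missing from $A$'' is pinned down by the degree-compatibility relation on the $\mu_i$, a sequence of elementary row and column operations identifies each principal minor of the essential block with the corresponding principal minor of $A$ up to an overall sign $(-1)^{|S|}$. Assembling these ingredients gives
\[
\gamma_\varphi(t) = \sum_{k=0}^{n-1}(-1)^k\sum_{|S|=k}\det A_{S,S}\,t^k = \det(I-tA) = t^{n-1}P_A(1/t),
\]
which is the claim.

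The hardest step, in my view, is the triangulation: the well-presented hypothesis must be unpacked into concrete combinatorics about how the rows of $M'_\varphi$ cut out $N_\varphi$ and its coordinate projections, and one has to verify that the $T_S$ actually tile $N_\varphi$ without gaps or overlaps. A closely related subtlety is the sign alignment, since $\hVol(T_S)$ is manifestly non-negative while the coefficients of $\det(I-tA)$ are non-negative only after the alternation in $|S|$ has been taken into account; the sign conditions packaged into the definition of well-presented and referenced in \S\ref{wp} are presumably exactly what is needed to force $(-1)^{|S|}\det A_{S,S}$ to be non-negative and to equal $|\det M'_{S,S}|$, so that the multidegrees emerge with their correct positive values.
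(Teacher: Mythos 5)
Your overall strategy (specialize Theorem~\ref{main2} to $\Pbb^{n-1}$ with coordinate hyperplanes, exhibit a triangulation of $N_\varphi$ indexed by subsets $S$ of the non-pivot rows, and match $\sum_S \hVol(T_S)t^{|S|}$ with $\det(I-tA)=t^{n-1}P_A(1/t)$) is a genuinely different route from the paper's, which never triangulates $N_\varphi$ at all in this argument: it decomposes the \emph{complementary} Newton region $N$ inside the positive orthant (Lemma~\ref{Newton}), obtains $G$ as the alternating sum $\sum_I(-1)^{n-|I|}\det M^I\prod_{k\notin I}X_k$ up to the factor $c(\cL^*)^{-1}$ (see \eqref{eq:prechar}), recognizes this as a characteristic polynomial (Lemma~\ref{ella}), and then peels off the eigenvalue $c_1(\cL)$ coming from the all-ones eigenvector (Lemma~\ref{Qaschar}, Corollary~\ref{charact}) before observing that $M''_\varphi(X)=h\cdot A$ on $\Pbb^{n-1}$. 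But as it stands your proposal has a genuine gap, and it is exactly where you say the difficulty lies: the claim that the simplices $T_S$ (all containing the origin, with infinite vertices at the complementary coordinate directions) tile $N_\varphi$ is asserted, not proved. This claim is the outer-region analogue of Lemma~\ref{Newton}, it is not a formal consequence of the projection condition as stated, and it carries essentially all the content of the theorem; saying that the well-presented hypothesis ``is precisely what is needed'' defers the entire proof. (A smaller inaccuracy: well-presentedness does not force the $T_S$ to be nondegenerate --- the definition in \S\ref{wp} allows $\det M^I=0$, and degenerate simplices must simply be allowed to contribute $0$, as in the Cremona example.)

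The second gap is the sign bookkeeping, which you flag with ``presumably.'' You need $\hVol(T_S)=(-1)^{|S|}\det A_{S,S}$, i.e.\ control of the signs of the \emph{principal minors of the translated matrix} $A$. The sign condition in \S\ref{wp} constrains a different family of determinants, namely $\det M^I$ for the \emph{untranslated} exponent matrix with matching rows and columns deleted; these are in general different numbers (in Example~\ref{tria}, the principal $1\times 1$ minor of $M_\varphi$ at index $1$ is $0$, while the corresponding entry of the translated block is $-3$). The paper never relates the two families minor-by-minor: the bridge is global, via the factorization $\det(t\,I-M_\varphi(X))=(t-c_1(\cL))\det(t\,I-M''_\varphi(X))$, which is valid only after imposing the relation \eqref{eq:basrel} among the $X_j$. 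So the ``short linear-algebra manipulation'' you invoke would have to reproduce something like Lemma~\ref{Qaschar} plus an argument that the tiling forces the signs --- neither of which is sketched. In short: the target identity and the shape of the decomposition are right, and a proof along your lines may well be possible, but the two pivotal steps (the tiling of $N_\varphi$ and the identity $\hVol(T_S)=(-1)^{|S|}\det A_{S,S}$) are exactly the counterparts of Lemma~\ref{Newton} and of the sign/factorization analysis in \S\ref{pf3}, and they are missing.
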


It would be interesting to provide alternative characterizations of well-presented
monomial rational maps.\smallskip

{\em Acknowledgments.} The author is grateful to Igor Dolgachev for useful conversations,
particularly concerning the material in~\S\ref{pf3}.
The author's research is partially supported by a Simons collaboration grant. 

%%%

\section{Proof of Theorem~\ref{main1}}\label{pf1}
\subsection{}\label{sec:multclass}
Fulton-MacPherson intersection theory yields a direct relation between the multidegrees 
of a rational map $\varphi: \Pbb^r \dashrightarrow \Pbb^N$ and the degrees of the 
{\em Segre classes\/} of the base scheme of~$\varphi$: see e.g., \cite{MR1956868}, 
Proposition~3.1; \cite{MR2221122}, Proposition~5; or \cite{Dolgachev}, Proposition~2.3.1.
The case considered here requires the straightforward generalization of this relation
to the case of rational maps $\varphi: V \dashrightarrow \Pbb^N$, where $V$ is a 
subvariety of $\Pbb^r$.

Notation: Let $V$ be a closed subvariety (or subscheme) of $\Pbb^r$, $\cL$ a line bundle 
on $V$, and let $\varphi: V \dashrightarrow \Pbb^N$ be the rational map determined 
by a linear system in $H^0(V,\cL)$. Let $\Gamma\subseteq V\times \Pbb^N$ be the 
closure of the graph of $\varphi$, and let $G$ be its `shadow' in $V$:
\[
G:=\pi_*((1+H+H^2+ \cdots)\cap [\Gamma])\quad,
\]
where $H$ is the pull-back of the hyperplane class from the $\Pbb^N$ factor, and
$\pi: \Gamma \to V$ is the projection. This is the `multidegree class' mentioned in 
the introduction; the multidegrees of $\varphi$ are the degrees of the components of 
$G$, viewed as classes in $\Pbb^r$:
\[
\gamma_\varphi(t)=\int (1+ht+ht^2 + \cdots) \cap G=
\sum_{\ell\ge 0} (h^{\dim V-\ell}\cdot G_\ell)\, t^\ell \quad,
\]
where $h$ is the pull-back of the hyperplane class from $\Pbb^r$, and $G_\ell$ is 
the term of codimension $\ell$ in $G$. Thus, computing
the multidegree polynomial is reduced to computing the multidegree class $G$.

\begin{lemma}\label{multclass}
Let $i: S\subseteq V$ be the base scheme of the linear system defining $\varphi$. 
Then 
\[
G=c(\cL^*)^{-1}(([V]- i_* s(S,V))\otimes_V \cL^*)\quad.
\]
\end{lemma}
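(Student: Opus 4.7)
The plan is to resolve $\varphi$ by a single blowup and reduce everything to a pushforward computation. Let $\sigma: \tilde V \to V$ be the blowup of $V$ along the base scheme $S$, with exceptional Cartier divisor $\tilde E$. Then $\sigma^*\cL \otimes \mathcal{O}(-\tilde E)$ is generated by the pullbacks of the sections cutting out the linear system, so it determines a morphism $\tilde\varphi: \tilde V \to \Pbb^N$ extending $\varphi \circ \sigma$. Consequently there is a proper birational map $f = (\sigma, \tilde\varphi): \tilde V \to \Gamma$ with $\pi \circ f = \sigma$, $f_*[\tilde V] = [\Gamma]$, and
\[
f^*H = c_1\bigl(\sigma^*\cL \otimes \mathcal{O}(-\tilde E)\bigr) = \sigma^* L - \tilde E,
\]
writing $L = c_1(\cL)$. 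Applying the projection formula to $f$ and then to $\sigma$ yields
\[
G = \sigma_*\left(\frac{[\tilde V]}{1 - \sigma^* L + \tilde E}\right).
\]

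The next step is to evaluate this pushforward. I would expand
\[
\frac{1}{1 - \sigma^*L + \tilde E} = \frac{1}{1-\sigma^*L} \sum_{k \geq 0}\left(\frac{-\tilde E}{1-\sigma^*L}\right)^k,
\]
treat each $(1-\sigma^*L)^{-(k+1)}$ as a pullback, and pull it out by the projection formula. The $k=0$ term contributes $[V]/c(\cL^*)$. For $k \geq 1$, Fulton's identification $\sigma_*(\tilde E^k \cap [\tilde V]) = (-1)^{k-1} i_* s(S,V)_{\dim V - k}$ of Segre classes via blowups gives, after keeping track of the extra $(-1)^k$,
\[
G = \frac{[V]}{c(\cL^*)} - \sum_{\ell \geq 1}\frac{i_*s(S,V)_{\dim V-\ell}}{c(\cL^*)^{\ell+1}}.
\]

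Finally, I would recognize this expression as the stated right-hand side. By definition of the tensor operation, $\alpha \otimes_V \cL^*$ divides the codimension-$\ell$ component of $\alpha$ by $c(\cL^*)^\ell$, so $[V]\otimes_V \cL^* = [V]$ (codimension zero) while $(i_*s(S,V))\otimes_V \cL^* = \sum_{\ell \geq 1} i_*s(S,V)_{\dim V-\ell}/c(\cL^*)^\ell$, with no codimension-zero contribution since $S \subsetneq V$. Multiplying $([V] - i_*s(S,V))\otimes_V \cL^*$ by $c(\cL^*)^{-1}$ reproduces the formula above term by term, completing the identification.

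The only nontrivial step is the geometric one at the start: that $\Gamma$ is dominated by $\tilde V$ via a proper birational $f$ with $f_*[\tilde V] = [\Gamma]$ and $f^*H = \sigma^* L - \tilde E$. This is standard for blowups along the base scheme of a linear system, but needs to be stated carefully because $V$ is only a subvariety (or subscheme) of $\Pbb^r$, so the ambient hyperplane class on $\Pbb^r$ does not directly enter this identification. The remainder is a mechanical geometric-series expansion, with the only delicate bookkeeping being the signs in Fulton's Segre formula and the codimension grading implicit in the $\otimes_V$ notation.
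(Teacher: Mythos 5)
Your proof is correct and follows the standard blowup-and-pushforward argument. The paper's own ``proof'' of this lemma is merely a pointer to Proposition~3.1 of~\cite{MR1956868}, whose argument is precisely the one you reconstruct: dominate $\Gamma$ by the blowup $\tilde V \to V$ along $S$, identify $f^*H = \sigma^*L - \tilde E$, push the geometric series in $f^*H$ down via the projection formula, and convert the resulting powers $\sigma_*(\tilde E^k\cap[\tilde V])$ into graded pieces of $i_*s(S,V)$ using Fulton's blowup description of Segre classes. Your bookkeeping of signs and of the codimension shift encoded by $\otimes_V\cL^*$ is accurate, so the final identification with $c(\cL^*)^{-1}(([V]-i_*s(S,V))\otimes_V\cL^*)$ goes through. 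One small caveat worth stating: the step $f_*[\tilde V]=[\Gamma]$ and the applicability of Fulton's Segre-class formula both use that $V$ is a variety (irreducible and reduced), which is the setting in which the lemma is actually applied; the parenthetical ``(or subscheme)'' in the paper's hypotheses would require a bit more care at exactly these two points.
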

Here we are using the $\otimes$ notation introduced in~\S2 of~\cite{MR96d:14004}:
if $a$ is a class of codimension~$p$ in $A_*V$, then $a\otimes_V \cL^*$ denotes
$c(\cL^*)^{-p}\cap a$; the class $a\otimes_V\cL^*$ is defined for all $a\in A_*V$ by 
extending this prescription by linearity.

\begin{proof}
This argument is entirely analogous to the one given in the proof of Proposition~3.1
of~\cite{MR1956868}.
\end{proof}

\subsection{}\label{monomialsetup}
Now assume that $\varphi$ is monomial in the sense specified in~\S\ref{intro}: 
the linear system defining~$\varphi$ is generated by monomials $\mu_0,\dots, 
\mu_N$ in sections $s_j$ of line bundles $\cL_j$, $j=1,\dots,n$ on $V$;
if $\mu_i = s_1^{m_{i1}}\cdots s_n^{m_{in}}$, we assume $\cL_1^{\otimes m_{i1}}
\otimes\cdots\otimes \cL_n^{\otimes m_{in}}\cong \cL$ for all $i$.

We let $X_j$ denote the zero-scheme of $s_j$. The precise condition we put on
the hypersurfaces~$X_j$ is that monomial schemes defined with respect to
$X_1,\dots,X_n$ may be principalized by a sequence of blow-ups along 
codimension~$2$ monomial subschemes defined with respect to the proper transforms
of the $X_j$'s and the exceptional divisors in the blow-up sequence. By a theorem of 
R.~Goward (\cite{MR2165388}, Theorem~2), this condition holds if $V$ is nonsingular 
and the hypersurfaces $X_j$ form a divisor with simple normal crossings. As observed at 
the end of the introduction of~\cite{Scaiop}, the condition (and hence the results of this 
note) holds in greater generality: for example, $V$ can be an arbitrarily singular 
subvariety of $\Pbb^r$, so long as the $X_j$ are obtained as intersections of $V$ with 
components of a divisor with simple normal crossings in $\Pbb^r$ and that $V$ 
meets properly all strata of this divisor.

\subsection{}\label{transl}
In the situation described in~\S\ref{monomialsetup}, the base scheme $S$ of $\varphi$
is the subscheme of $V$ defined by the monomials $\mu_0,\dots,\mu_N$. This is
a monomial scheme in the sense adopted in~\cite{Scaiop}. As in~\cite{Scaiop},
we associate with the monomials $\mu_i = s_1^{m_{i1}}\cdots s_n^{m_{in}}$ the lattice 
points $(m_{i1},\dots,m_{in})$ in $\Rbb^n$, and the `Newton region' $N$ obtained as 
the (closure of the) complement in the positive orthant of the convex hull $N^c$ of the 
translations by $\mu_i$ of the positive orthants.
\begin{center}
\includegraphics[scale=.5]{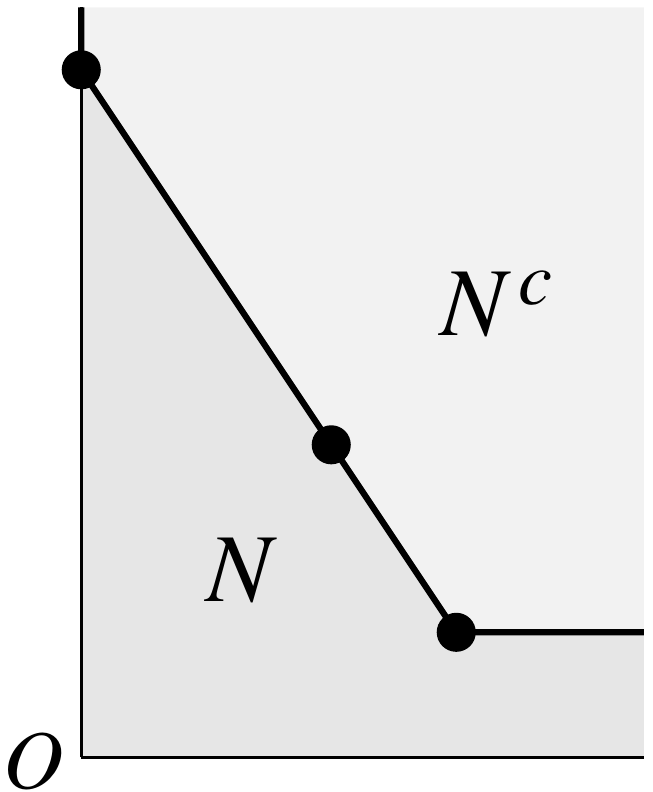}
\end{center}
If $d_j=h^{\dim V-1}\cdot X_j$, then for every $i$ we have $\sum d_j m_{ij} =
h^{\dim V-1}\cdot \sum m_{ij} X_j=h^{\dim V-1}\cdot c_1(\cL)\cap [V]=:d$. Therefore,
all the vertices $(m_{ij})$ belong to the hyperplane with equation $d_1 a_1+\cdots +
d_n a_n = d$ in $\Rbb^n$. We translate the vertices so that this hyperplane
goes through the origin, for example by subtracting the coordinates of one vertex. 
(The choice of this pivoting monomial will be irrelevant.)
\begin{center}
\includegraphics[scale=.5]{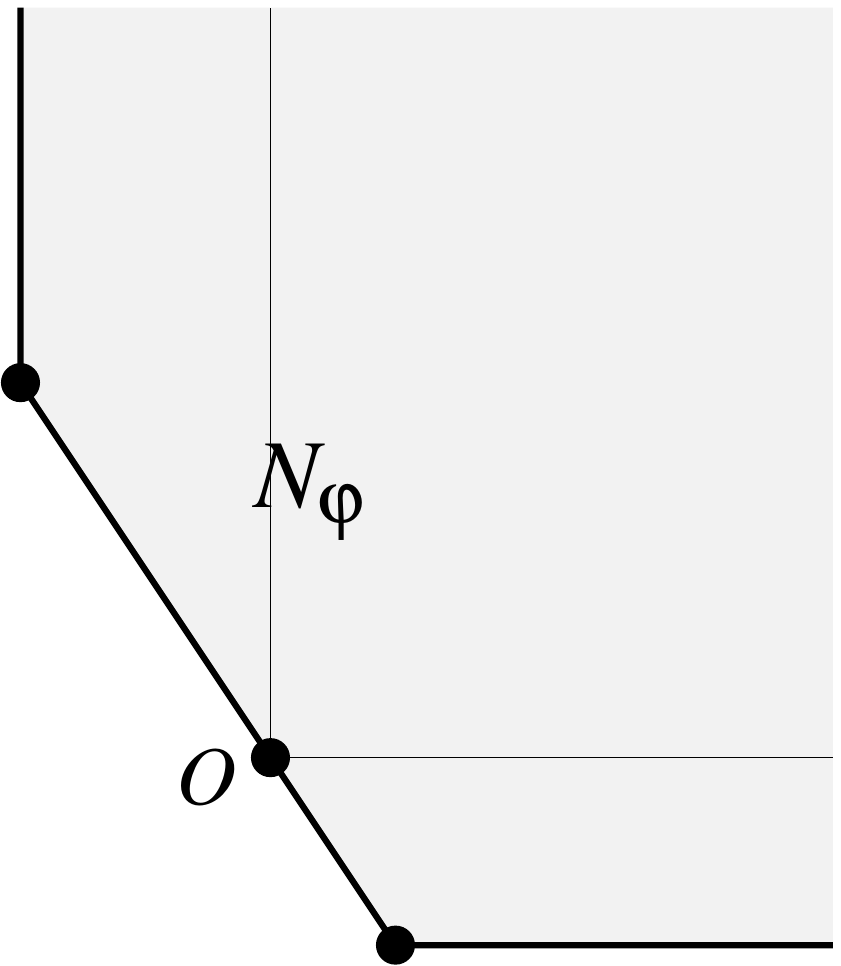}
\end{center}
The `Newton outer region' $N_\varphi$ is the corresponding translation of the 
region $N^c$.

\begin{lemma}\label{Segre}
With notation as above,
\[
[V]-i_* s(S,V)=\int_{N^c} \frac{n!\, X_1\cdots X_n\, da_1\cdots da_n}
{(1+a_1 X_1+\cdots +a_n X_n)^{n+1}}\quad.
\]
\end{lemma}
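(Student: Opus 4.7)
The plan is to derive the lemma from the Segre class computation for monomial schemes in the author's earlier work~\cite{Scaiop}. Under the hypothesis of~\S\ref{monomialsetup}, the base scheme $S$ is a monomial scheme with respect to $X_1,\dots,X_n$, so $i_* s(S,V)$ is described by~\cite{Scaiop} as a sum over full-dimensional simplices $T$ in a triangulation of the Newton region $N$, with vertices chosen from the finite vertices of $N$ together with the infinite coordinate vertices. Each contribution has the form $\hVol(T)\cdot\deg(T)$ in the notation recalled just before Theorem~\ref{main2}.

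The first step is to repackage this simplicial sum as an integral over $N$, in parallel with the passage from Theorem~\ref{main2} to Theorem~\ref{main1} (but without the $t$ parameter, since here we are assembling a class on $V$ rather than a multidegree polynomial on an ambient projective space). Concretely, integrating the rational function $n!\, X_1\cdots X_n\,(1+a_1 X_1+\cdots+a_n X_n)^{-(n+1)}$ over a single simplex $T$, and then applying the substitution rule described in the remark after Theorem~\ref{main1}, reproduces the contribution $\hVol(T)\cdot\deg(T)$; summing over the triangulation gives
\[
i_* s(S,V) \;=\; \int_N \frac{n!\,X_1\cdots X_n\, da_1\cdots da_n}{(1+a_1 X_1+\cdots+a_n X_n)^{n+1}}.
\]
The second step is elementary: the integral of the same kernel over the entire positive orthant equals $[V]$. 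Indeed, the change of variables $u_j = a_j X_j$ reduces it to the standard Dirichlet-type identity $\int_{u\ge 0} n!\,(1+u_1+\cdots+u_n)^{-(n+1)}\,du_1\cdots du_n = 1$, and under the prescription of the remark after Theorem~\ref{main1} the constant $1$ represents the fundamental class $[V]\in A_{\dim V}(V)$. Since $N$ and $N^c$ partition the positive orthant (up to measure zero), subtracting the integral over $N$ from the integral over the whole orthant yields the formula.

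The main obstacle is step one: one must verify that the integral of the specified rational function over a single simplex $T$, interpreted via the substitution rule of the remark after Theorem~\ref{main1}, matches the combinatorial contribution $\hVol(T)\cdot\deg(T)$ supplied by~\cite{Scaiop}. Projecting along the infinite coordinate directions of $T$ reduces this to a residue-type computation on a bounded lower-dimensional simplex; the projection accounts for the normalization $\hVol(T)$, while the integration against $(1+\sum a_j X_j)^{-(n+1)}$ contributes the intersection monomial $\prod X_j$ ranging over the finite vertices of $T$, matching $\deg(T)$ exactly. The delicate point is the interplay between finite and infinite vertices, which requires the formal parameter $X_j$ at a finite vertex to emerge as a genuine intersection class while those at infinite vertices cancel against factors introduced by integrating to $+\infty$; once this bookkeeping is in place, the two halves of the argument combine immediately to give the stated identity.
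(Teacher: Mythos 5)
Your overall architecture (quote the Segre class computation from \cite{Scaiop} on the Newton region $N$, observe that the orthant integral is $[V]$, and subtract) is the same as the paper's, and your ``step two'' is fine. But there is a genuine gap in ``step one,'' which you yourself flag as the main obstacle.

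You assert that \cite{Scaiop} gives $i_*s(S,V)$ as a sum of contributions $\hVol(T)\cdot\deg(T)$ over simplices $T$ in a triangulation of $N$, and that integrating $n!\,X_1\cdots X_n\,(1+\sum a_jX_j)^{-(n+1)}$ over a single $T$ reproduces $\hVol(T)\cdot\deg(T)$. Neither claim is correct. First, $\deg(T)=h^{\dim V-\rk T}\cdot\prod X_j$ is a \emph{number}, whereas $i_*s(S,V)$ is a class in $A_*V$; the ``degree'' notation belongs to the multidegree-polynomial setting of Theorem~\ref{main2}, not to the Segre class identity. Second, and more importantly, the per-simplex value of the integral is given by Lemma~\ref{simplexco} (quoted from \cite{Scaiop}, Lemma~2.5) as
\[
\frac{\hVol(T)}{\prod_{i=0}^{s}\bigl(1+v_{i1}X_1+\cdots+v_{in}X_n\bigr)}\cdot\frac{X_1\cdots X_n}{\prod_{k=1}^{n-s}X_{j_k}}\,,
\]
which carries the nontrivial denominator $\prod_i\bigl(1+\sum_j v_{ij}X_j\bigr)$. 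For the untranslated region $N$ the finite vertices are the lattice points $(m_{ij})$, for which $\sum_j m_{ij}X_j$ evaluates to $c_1(\cL)\ne 0$; so those denominator factors do \emph{not} drop out (they only vanish for the translated region $N_\varphi$, used later in \S\ref{pf2}). Those rational denominators are exactly what encodes the Segre-class behavior, and discarding them yields a different (wrong) class. So the claimed simplification of the per-simplex integral fails, and with it step one.

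In fact the detour is unnecessary: the main theorem of \cite{Scaiop} is precisely the integral identity $i_*s(S,V)=\int_{N}\frac{n!\,X_1\cdots X_n\,da_1\cdots da_n}{(1+a_1X_1+\cdots+a_nX_n)^{n+1}}$ (the simplicial form, with the denominator factors, is a consequence, not the primary statement). Citing that directly and subtracting the orthant integral $=[V]$ gives the lemma in one line, which is what the paper does. If you want to keep a simplicial route, you would have to carry the denominator factors through and show that the sum over a triangulation of $N$ telescopes to the correct class; as written, the argument does not close.
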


\begin{proof}
This follows immediately from the main theorem in~\cite{Scaiop}, which shows that the
Segre class is the integral of the same function over $N$. Since the integral over the
positive orthant is~$1$ (i.e., $[V]$), the integral over $N^c$ equals $[V]-i_* s(S,V)$
when viewed in $A_*V$, as stated.
\end{proof}

\subsection{}\label{sec:proof}
By Lemmas~\ref{multclass} and~\ref{Segre}, the multidegree class is given by
\[
G=c(\cL^*)^{-1}\left(\int_{N^c} \frac{n!\, X_1\cdots X_n\, da_1\cdots da_n}
{(1+a_1 X_1+\cdots +a_n X_n)^{n+1}}\otimes_V \cL^*\right)\quad.
\]
We can perform the $\otimes_V$ on the integrand. If $\mu = s_1^{m_{1}}\cdots 
s_n^{m_{n}}$ is the pivoting monomial, then $m_1 X_1+\cdots + m_n X_n$
represents $c_1(\cL)$ (once the $X_j$ are replaced with the homonymous 
cycles). Using the simple properties of $\otimes_V$ (cf.~\S2 of~\cite{MR96d:14004}),
\[
\frac{n!\, X_1\cdots X_n}
{(1+a_1 X_1+\cdots +a_n X_n)^{n+1}}\otimes_V \cL^*
=\frac{n!\, c(\cL^*)\cdot X_1\cdots X_n}
{(1+(a_1-m_1) X_1+\cdots +(a_n-m_n) X_n)^{n+1}}
\]
and hence
\begin{equation}\label{eq:premultclass}
G=\int_{N^c} \frac{n!\, X_1\cdots X_n\, da_1\cdots da_n}
{(1+(a_1-m_1) X_1+\cdots +(a_n-m_n) X_n)^{n+1}}
\end{equation}
Now, as $(a_1,\dots,a_n)$ ranges over $N^c$, the translated point 
$(a_1-m_1,\dots,a_n-m_n)$ ranges over $N_\varphi$. Therefore,

\begin{theorem}\label{primary}
With notation as above, the multidegree class of a monomial rational map 
$V\dashrightarrow \Pbb^n$ is given by
\begin{equation}\label{eq:multclass}
G=\int_{N_\varphi} \frac{n!\, X_1\cdots X_n\, da_1\cdots da_n}
{(1+a_1 X_1+\cdots + a_n X_n)^{n+1}}
\end{equation}
\end{theorem}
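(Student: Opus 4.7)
The plan is to combine the two lemmas already established in this section with the simple formal properties of the $\otimes_V$ operation. By Lemma~\ref{multclass}, the multidegree class is
\[
G = c(\cL^*)^{-1}\bigl(([V] - i_* s(S,V)) \otimes_V \cL^*\bigr),
\]
and Lemma~\ref{Segre} expresses $[V] - i_* s(S,V)$ as an integral over the shifted Newton region $N^c$. Substituting the second into the first, the problem reduces to computing the effect of $\otimes_V \cL^*$ on the integrand and absorbing the outer factor of $c(\cL^*)^{-1}$.

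The next step is to move $\otimes_V \cL^*$ inside the integral and apply it term by term. By linearity of $\otimes_V$, it suffices to understand its action on expressions of the form $X_1 \cdots X_n (1 + \sum a_j X_j)^{-(n+1)}$. Writing $c_1(\cL) = \sum m_j X_j$ (where $\ue m=(m_1,\dots,m_n)$ is the exponent vector of the pivoting monomial) and using the standard formulas from \S2 of \cite{MR96d:14004}, in particular that $\otimes_V \cL^*$ shifts denominators via $c_1(\cL)$ and multiplies numerators by $c(\cL^*)$ to the appropriate power, one obtains the identity already displayed before the theorem:
\[
\frac{n!\, X_1\cdots X_n}{(1+a_1 X_1+\cdots + a_n X_n)^{n+1}} \otimes_V \cL^* = \frac{n!\, c(\cL^*)\, X_1\cdots X_n}{(1+(a_1-m_1)X_1+\cdots+(a_n-m_n)X_n)^{n+1}}.
\]
Since these classes will eventually be intersected against a fixed-dimensional cycle on $V$, only finitely many homogeneous components are relevant, so the rational-function manipulation can be interpreted literally. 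The factor $c(\cL^*)$ then cancels against the outer $c(\cL^*)^{-1}$ coming from Lemma~\ref{multclass}, leaving formula~\eqref{eq:premultclass}.

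Finally, I perform the affine change of variables $a_j \mapsto a_j - m_j$ in the integral. By construction of $N_\varphi$ as the translation of $N^c$ by the pivoting lattice point, this change of variables turns the integration region $N^c$ into $N_\varphi$ and the denominator $(1+(a_1-m_1)X_1+\cdots)^{n+1}$ into $(1+a_1 X_1+\cdots+a_n X_n)^{n+1}$; the Jacobian is $1$. This yields exactly \eqref{eq:multclass}.

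The only potentially delicate point is the first manipulation: interpreting $\otimes_V \cL^*$ applied to an integral whose integrand is a formal rational function in the classes $X_j$. The rigorous reading is the one indicated in the remark after Theorem~\ref{main1} — expand in $a$-dependent parameters, integrate, and only then substitute divisor classes — so that $\otimes_V$ is ultimately applied to an honest element of $A_*V$. Once this formalism is accepted, the proof reduces to the bookkeeping above.
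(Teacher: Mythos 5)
Your proposal is correct and follows essentially the same route as the paper: combine Lemma~\ref{multclass} with Lemma~\ref{Segre}, push $\otimes_V\cL^*$ through the integrand using the formal properties from \S2 of \cite{MR96d:14004} to arrive at~\eqref{eq:premultclass}, then translate the integration region from $N^c$ to $N_\varphi$. Your closing remark about the correct reading of $\otimes_V$ applied to a formal integral is a sensible clarification but does not change the argument.
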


Theorem~\ref{primary} is the primary result. To complete the proof of
Theorem~\ref{main1}, it suffices to read the degree $h^{\dim V-\ell}\cdot G_\ell$
off the components of the multidegree class, where $G_\ell$ has codimension~$\ell$
in $V$. From~\eqref{eq:multclass}, inserting a dummy variable $u$ to keep track of
codimensions, we see that
\begin{equation}\label{multclasst}
G_0+G_1 u+ G_2 u^2 + \cdots =
\int_{N_\varphi} \frac{n!\, X_1\cdots X_n\, u^n\,da_1\cdots da_n}
{(1+(a_1 X_1+\cdots + a_n X_n) u)^{n+1}}\quad.
\end{equation}
Formally,
\begin{align*}
\gamma_\varphi(t) &=h^{\dim V}\cdot G_0+ h^{\dim V-1}\cdot G_1\, t
+ h^{\dim V-2}\cdot G_2\, t^2 +\cdots) \\
&= h^{\dim V}\left( G_0 + G_1 \frac th + G_2 \frac {t^2}{h^2} +\dots\right)
\end{align*}
Implementing this formal manipulation in~\eqref{multclasst} yields the integral
given in~\eqref{eq:main1}, concluding the proof of Theorem~\ref{main1}.

%%%

\section{Proof of Theorem~\ref{main2}}\label{pf2}

\subsection{}
Integrals such as the one appearing in~Theorem~\ref{primary} may be computed
from a triangulation of the region~$N_\varphi$. For us, a {\em generalized simplex\/} 
$T$ of {\em rank\/} $s$ in $\Rbb^n$ is the subset spanned by $s+1$ affinely independent
points $\uv_0,\dots, \uv_s$, and $n-s$ positive 
coordinate directions $\ua_{j_1},\dots, \ua_{j_{n-s}}$ (`infinite vertices'). Thus, 
\[
T = \left\{\sum_{i=0}^s \alpha_i \uv_i+ \sum_{k=1}^{n-s} \beta_k \ue_{j_k}\quad | \quad
\text{$\forall i,k: \alpha_i\ge 0, \beta_k\ge 0$, and $\sum_i \alpha_i=1$}\right\}
\]
where $\ue_1=(1,0,\dots, 0)$, \dots, $\ue_n=(0,\dots, 0, 1)$.
The (normalized) volume $\hVol(T)$ of a simplex is the normalized volume of the
simplex obtained by projecting $T$ along its infinite directions. The simplex may
degenerate when the vertices are affinely dependent; the volume of such an
`empty' simplex is $0$.

A simple calculus exercise yields the following result.

\begin{lemma}\label{simplexco}
If $T$ has finite vertices $\uv_i=(v_{i1},\dots,v_{is})$, $i=0,\dots,s$, and infinite vertices
$\ua_{j_k}$, $k=1,\dots,n-s$, then
\[
\int_T \frac{n!\, X_1\cdots X_n\, da_1\cdots da_n}
{(1+a_1 X_1+\cdots +a_n X_n)^{n+1}}
=\frac{\hVol(T)}
{\prod_{i=0}^s (1+v_{i 1}X_1+\cdots+ v_{i n}X_n)}\cdot
\frac{X_1\cdots X_n}{\prod_{k=1}^{n-s} X_{j_k}}\quad.
\]
\end{lemma}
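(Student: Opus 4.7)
The plan is to change variables to the natural simplex parameters, perform the Jacobian computation to produce the factor $\hVol(T)$, then evaluate the resulting integral by two standard identities (an integral over an orthant and a Dirichlet-type integral over a standard simplex).

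First I would parametrize $T$ by writing a point as
\[
\sum_{i=0}^s \alpha_i\uv_i+\sum_{k=1}^{n-s}\beta_k\ue_{j_k},\qquad
\alpha_i\ge 0,\ \sum_i\alpha_i=1,\ \beta_k\ge 0.
\]
Eliminating $\alpha_0=1-\alpha_1-\cdots-\alpha_s$ gives a bijection between $T$ and $\Delta_s\times\Rbb^{n-s}_{\ge 0}$, where $\Delta_s$ is the standard $s$-simplex in $(\alpha_1,\dots,\alpha_s)$. Let $I=\{1,\dots,n\}\setminus\{j_1,\dots,j_{n-s}\}$, $|I|=s$. With respect to the ordering of rows $(j\in I,\ j\notin I)$ and columns $(\alpha_1,\dots,\alpha_s,\beta_1,\dots,\beta_{n-s})$, the Jacobian matrix is block lower triangular (since $\partial a_j/\partial\beta_k=\delta_{j=j_k}=0$ for $j\in I$), with lower-right block equal to the identity. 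Hence $|\det J|$ equals the absolute value of the $s\times s$ minor $\det[(v_{ij}-v_{0j})_{j\in I,\,i=1,\dots,s}]$, which is exactly $s!$ times the Euclidean volume of the projection of $T$ along its infinite directions, i.e.\ $\hVol(T)$.

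Next I would substitute: setting $L_i:=v_{i1}X_1+\cdots+v_{in}X_n$, one checks
\[
1+\sum_j a_jX_j=\sum_{i=0}^s\alpha_i(1+L_i)+\sum_{k=1}^{n-s}\beta_kX_{j_k},
\]
using $\sum\alpha_i=1$. Rescaling $y_k=X_{j_k}\beta_k$ in the orthant integral pulls out a factor $\prod_k X_{j_k}^{-1}$, and the standard iterated-orthant integral
\[
\int_0^\infty\!\!\cdots\!\!\int_0^\infty\frac{dy_1\cdots dy_{n-s}}{(A+y_1+\cdots+y_{n-s})^{n+1}}=\frac{s!}{n!\,A^{s+1}}
\]
(obtained by integrating one $y_k$ at a time) produces the factor $s!/n!$ and reduces the power of the denominator to $s+1$, with $A=\sum_i\alpha_i(1+L_i)$.

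Finally, I would invoke the Dirichlet-type identity
\[
\int_{\Delta_s}\frac{d\alpha_1\cdots d\alpha_s}{\bigl(\sum_{i=0}^s\alpha_ic_i\bigr)^{s+1}}=\frac{1}{s!\,\prod_{i=0}^s c_i},
\]
valid for positive $c_i$ and proved by induction on $s$ (the case $s=1$ being immediate, the inductive step following by integrating out $\alpha_s$). Applying this with $c_i=1+L_i$ and multiplying in the Jacobian $\hVol(T)$, the $s!$ from the $\beta$-integration, the $1/s!$ from the simplex integration, and the factor $n!\,X_1\cdots X_n$ from the integrand assembles to the claimed right-hand side. The only real work is bookkeeping; the single non-obvious step is recognizing that the Jacobian of the change of variables is precisely $\hVol(T)$, which hinges on the block-triangular structure forced by the infinite vertices being standard basis directions.
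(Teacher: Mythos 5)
Your proof is correct, and it fills in exactly what the paper leaves implicit: the paper's ``proof'' of Lemma~\ref{simplexco} is only a citation of Lemma~2.5 of~\cite{Scaiop} (announced as ``a simple calculus exercise''), and your computation is precisely that exercise carried out --- the affine parametrization of $T$ with the block-triangular Jacobian equal to $\hVol(T)$, the rescaled orthant integration producing $s!/n!$ and the factor $\prod_k X_{j_k}^{-1}$, and the Feynman/Dirichlet identity over the standard simplex; all constants assemble correctly. The only (cosmetic) point you could add is that when the projections of the finite vertices are affinely dependent, $T$ has measure zero and $\hVol(T)=0$, so the identity holds trivially in the degenerate case as well.
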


\begin{proof}
\cite{Scaiop}, Lemma~2.5.
\end{proof}

\subsection{}
The point now is that if $(v_{i1},\dots, v_{in})$ is one of the translated monomials, i.e., one
of the vertices of $N_\varphi$, then after evaluating the $X_i$'s to the corresponding
classes,
\[
v_{i1}X_1+\cdots+ v_{i n}X_n = 0\quad:
\]
indeed, $v_{i 1}X_1+\cdots+ v_{i n}X_n$ is obtained by subtracting two classes 
both representing $c_1(\cL)$. Therefore, if $T$ is part of a triangulation of $N_\varphi$,
and the finite vertices of $T$ are vertices of~$N_\varphi$, then the contribution of $T$ to 
the multidegree class $G$ is simply (by Theorem~\ref{primary}) 
\[
\hVol(T)\cdot
\frac{X_1\cdots X_n}{\prod_{k=1}^{n-s} X_{j_k}}\quad.
\]
The factor
\[
X_T:=\frac{X_1\cdots X_n}{\prod_{k=1}^{n-s} X_{j_k}}
\]
for the generalized simplex $T$ is the product of the classes $X_j$ such that $\ua_j$
is {\em not\/} an infinite vertex of $T$. As a class in $A_*V$, $X_T$ has codimension
equal to the rank of $T$. Summarizing,

\begin{corol}
With notation as above, the multidegree class for $\varphi$ is given by
\begin{equation}\label{eq:simplices}
G=\sum_T \hVol(T) \cdot X_T
\end{equation}
where the sum is over the generalized simplices in a triangulation of $N_\varphi$, 
with finite vertices at vertices of $N_\varphi$.
\end{corol}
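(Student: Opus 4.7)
The plan is to deduce the corollary as a direct consequence of Theorem~\ref{primary} together with Lemma~\ref{simplexco}, using the additivity of the integral over a triangulation and a single algebraic observation about vertices of $N_\varphi$.

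First I would invoke Theorem~\ref{primary}, which expresses the multidegree class $G$ as the integral
\[
G=\int_{N_\varphi} \frac{n!\, X_1\cdots X_n\, da_1\cdots da_n}{(1+a_1 X_1+\cdots+a_n X_n)^{n+1}}\quad.
\]
Given any triangulation of $N_\varphi$ into generalized simplices $T$ whose finite vertices lie at vertices of $N_\varphi$, the integral splits as a sum over $T$. For each $T$, Lemma~\ref{simplexco} evaluates the piece as
\[
\frac{\hVol(T)}{\prod_{i=0}^s (1+v_{i1}X_1+\cdots+v_{in}X_n)}\cdot \frac{X_1\cdots X_n}{\prod_{k=1}^{n-s} X_{j_k}}\quad,
\]
where the $\uv_i$ are the finite vertices of $T$ and $\ua_{j_k}$ are the infinite ones. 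The second factor is exactly $\hVol(T)\, X_T$ up to the first fraction, so the whole task reduces to showing that each denominator $1+v_{i1}X_1+\cdots+v_{in}X_n$ evaluates to $1$ after replacing the $X_j$ by the classes of the corresponding divisors in $A_*V$.

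This is the key step, and the content of the paragraph preceding the corollary: each finite vertex $\uv_i=(v_{i1},\dots,v_{in})$ of $N_\varphi$ is one of the translated exponent vectors $\um'_i=\um_i-\um_N$, and the combinations $\sum_j m_{ij} X_j$ and $\sum_j m_{Nj} X_j$ both represent $c_1(\cL)$ in $A_*V$, since the corresponding monomials $\mu_i$ and $\mu_N$ are sections of the same line bundle $\cL$. Hence $\sum_j v_{ij}X_j$ is the difference of two classes equal to $c_1(\cL)$, so it vanishes in $A_*V$, and each factor of the product in the denominator collapses to $1$.

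The only subtlety — which I would treat carefully in the write-up — is that the vanishing $\sum_j v_{ij}X_j=0$ must be read as an identity of classes, so one must first perform the integral formally with the $X_j$ as parameters, then substitute the actual classes; this is precisely the interpretation of the integral already fixed in the remark following Theorem~\ref{main1}. The manipulation is legitimate because the integrand produced by Lemma~\ref{simplexco} is a rational function in the $X_j$ whose denominators become $1$ under the substitution, so no pole is crossed. Summing the resulting contributions $\hVol(T)\cdot X_T$ over the simplices of the triangulation yields the stated formula~\eqref{eq:simplices}.
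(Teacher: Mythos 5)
Your proposal is correct and follows essentially the same route as the paper: apply Theorem~\ref{primary} to write $G$ as an integral over $N_\varphi$, split the integral over a triangulation, evaluate each piece with Lemma~\ref{simplexco}, and observe that since the finite vertices are translated exponent vectors, each $\sum_j v_{ij}X_j$ is a difference of two representatives of $c_1(\cL)$ and hence vanishes in $A_*V$, collapsing the denominators. The extra remark about computing formally in the parameters $X_j$ before substituting classes is a reasonable clarification but does not alter the argument.
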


\begin{remark}
The region $N_\varphi$ may be viewed as the convex hull of the (finite) translated monomials
and of the (infinite) positive coordinate directions. As such, it always admits a triangulation
whose simplices have vertices among these points, cf.~\S2.2 
in~\cite{MR2743368}.
\qede\end{remark}

\subsection{}
If an embedding of $V$ in a projective space $\Pbb^r$ has been chosen, and $h$ is the
restriction of the hyperplane class, it is now natural to let the {\em degree\/} of $T$ be the 
intersection number $h^{\dim V -\rk T} \cdot X_T$. By~\eqref{eq:simplices}, we have
\[
\gamma_\ell = \sum_{\rk T=\ell} \hVol(T) \deg(T)\quad,
\]
where the sum is over (maximal dimension) simplices of fixed rank in a triangulation of
$N_\varphi$. In other words,
\[
\gamma_\varphi(t)= \sum_T \hVol(T) \deg (T)\, t^{\rk (T)}\quad,
\]
concluding the proof of~Theorem~\ref{main2}.

%%%

\section{Well-presented rational maps}\label{pf3}
\subsection{}
We now consider the $n\times (N+1)$-matrix $M_\varphi$ whose rows are the
vectors
\[
\um_i=(m_{i1}, \cdots, m_{in})
\]
determined by the exponents of the monomials defining the rational map~$\varphi$.
We aim at identifying a condition under which the multidegree polynomial of $\varphi$ 
may be obtained directly from this matrix. We specialize to the case $N=n-1$, 
and renumber the monomials from $1$ to $n$, so the matrix $M_\varphi:=
(m_{ij})_{\substack{i=1,\dots, n \\ j=1,\dots,n}}$ is square.
The motivating example is the case of dominant (ordinary) monomial maps $\Pbb^{n-1} 
\dashrightarrow \Pbb^{n-1}$, and monomial Cremona transformations in particular 
(cf.~\S3.5  of~\cite{Dolgachev}); we remind the reader that our context is more general,
in that the source need not be $\Pbb^{n-1}$ (or even $(n-1)$-dimensional) and the
monomials may be built on sections of line bundles, cf.~\S\S\ref{intro} and~\ref{pf1}.

\subsection{}\label{wp}
We say that
$\varphi$ is {\em well-presented\/} by $M=M_\varphi$ if the following condition holds.
For every subset $I=\{i_1,\dots,i_\ell\}\subseteq\{1,\dots, n\}$, we let $M^I$ be the 
matrix obtained by removing the $i$-th row and column of $M$ for all $i\in I$. 
We also let $\pi_I$ denote the projection $\Rbb_{\ge 0}^n \to \Rbb_{\ge 0}^{n-\ell}$ 
along the $i_1,\dots, i_\ell$ directions. We require that
\begin{itemize}
\item The projection $\pi_I(N_\varphi)$ of the Newton outer region of $\varphi$ equals 
the Newton outer region determined by the projections $\pi_I(\um_k)$ for $k\not\in I$; and
\item For all $I\subsetneq \{1,\dots,n\}$, the determinant of $M^I$ is either $0$ or has 
sign $(-1)^{n-1-|I|}$.
\end{itemize}

Roughly, these conditions say that the ordered simplex determined by the rows of the
matrix $M$ is in sufficiently general position with respect to the coordinate directions.

\begin{example}\label{cremo}
The standard Cremona transformation $\Pbb^2 \dashrightarrow \Pbb^2$ 
$(x_1:x_2:x_3) \mapsto (x_2 x_3: x_1 x_3: x_1 x_2)$ with matrix
\[
\begin{pmatrix}
0 & 1 & 1 \\
1 & 0 & 1 \\
1 & 1 & 0
\end{pmatrix}
\]
is well-presented.
\begin{center}
\includegraphics[scale=.5]{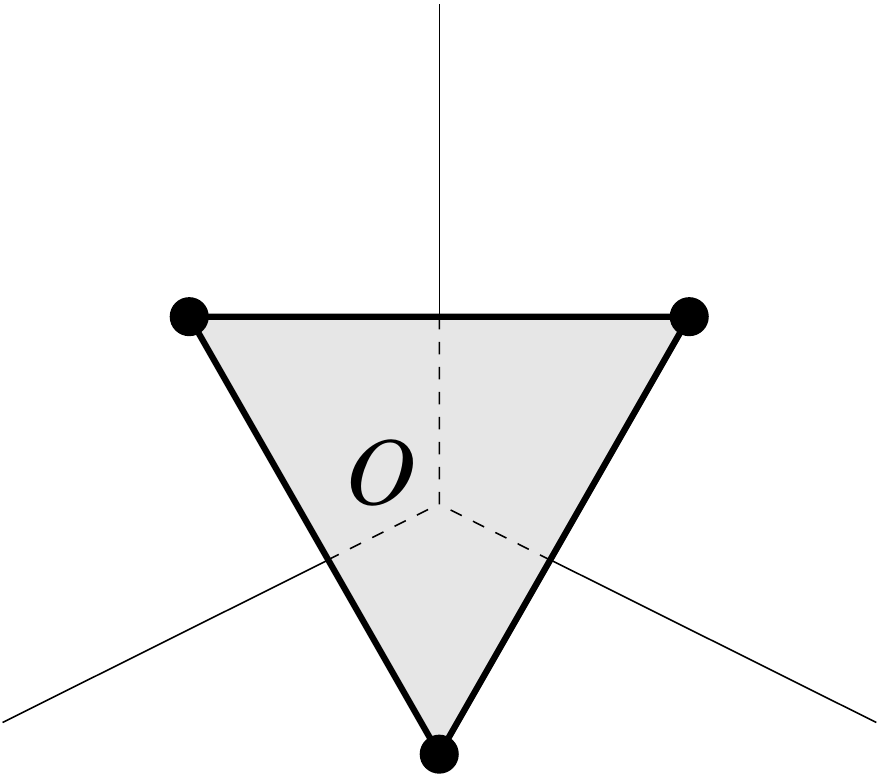}
\end{center}
With (for example) $I=\{3\}$, we see that the projection $(1,1)$ of the third row to the
horizontal plane is in the region determined by the projections $(0,1)$ and $(1,0)$ of the 
other rows:
\begin{center}
\includegraphics[scale=.5]{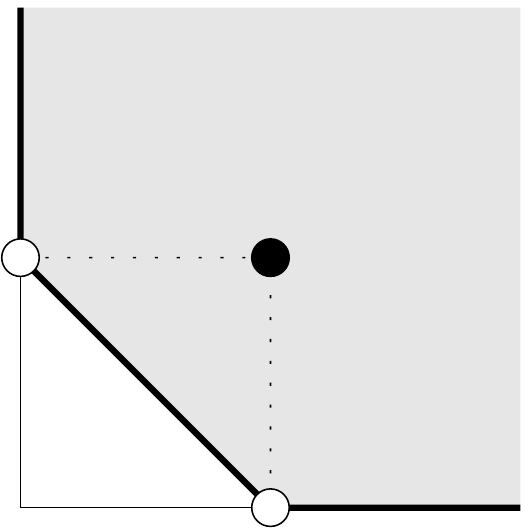}
\end{center}
Also, $\det M^{\{3\}}=\det \begin{pmatrix} 0 & 1 \\ 1 & 0 \end{pmatrix}=-1$, as 
required. The reader can verify that the conditions are satisfied for all choices of $I$.

On the other hand, the identity $\Pbb^2 \to \Pbb^2$ is {\em not\/} well-presented.
\begin{center}
\includegraphics[scale=.5]{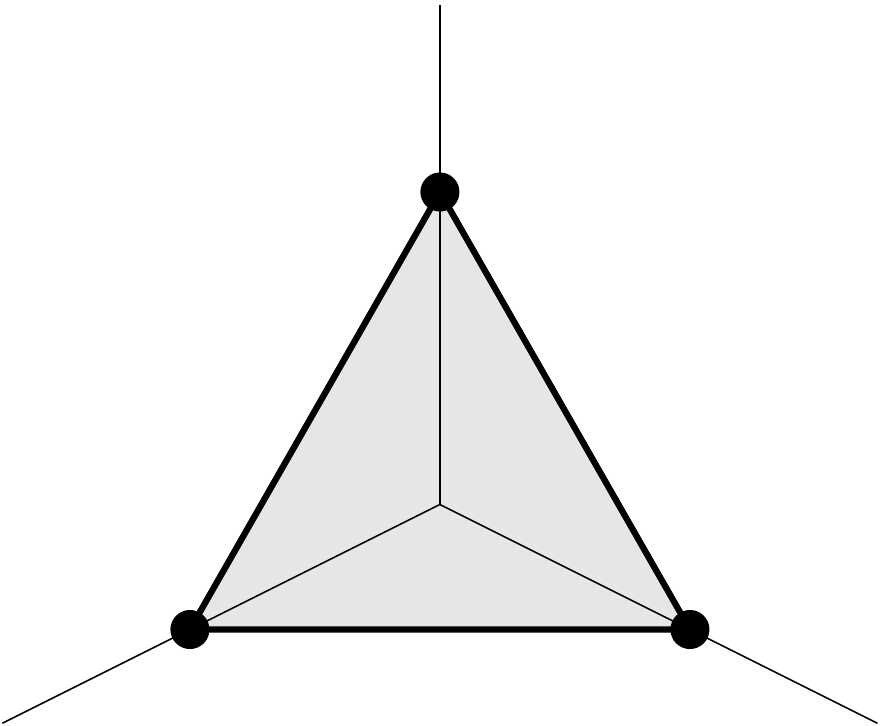}
\end{center}
For example, for $I=\{3\}$ we have the following projection:
\begin{center}
\includegraphics[scale=.5]{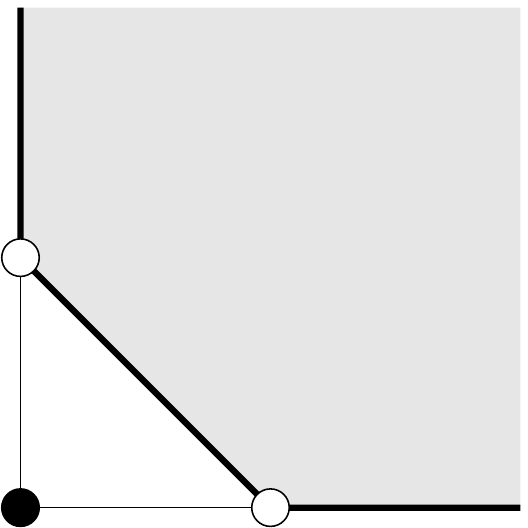}
\end{center}
The projection of the third row is not in the outer region determined by the other two.
Further, $\det M^{\{3\}}=1\ne (-1)^1$ in this case.
\qede\end{example}

\subsection{}
The main implication of the condition considered in~\S\ref{wp} is the following description
of the {\em Newton region\/} $N$ determined by a well-presented monomial map. This is the 
closure in the positive orthant of the complement of the outer region $N^c$ determined 
by the rows of~$M_\varphi$. (Recall that the region $N_\varphi$ is a translation of
$N^c$.) For example, for the Cremona transformation in
Example~\ref{cremo}, the Newton region consists of three infinite parallelepipeds along 
the coordinate axes, and one tetrahedron connecting them:
\begin{center}
\includegraphics[scale=.5]{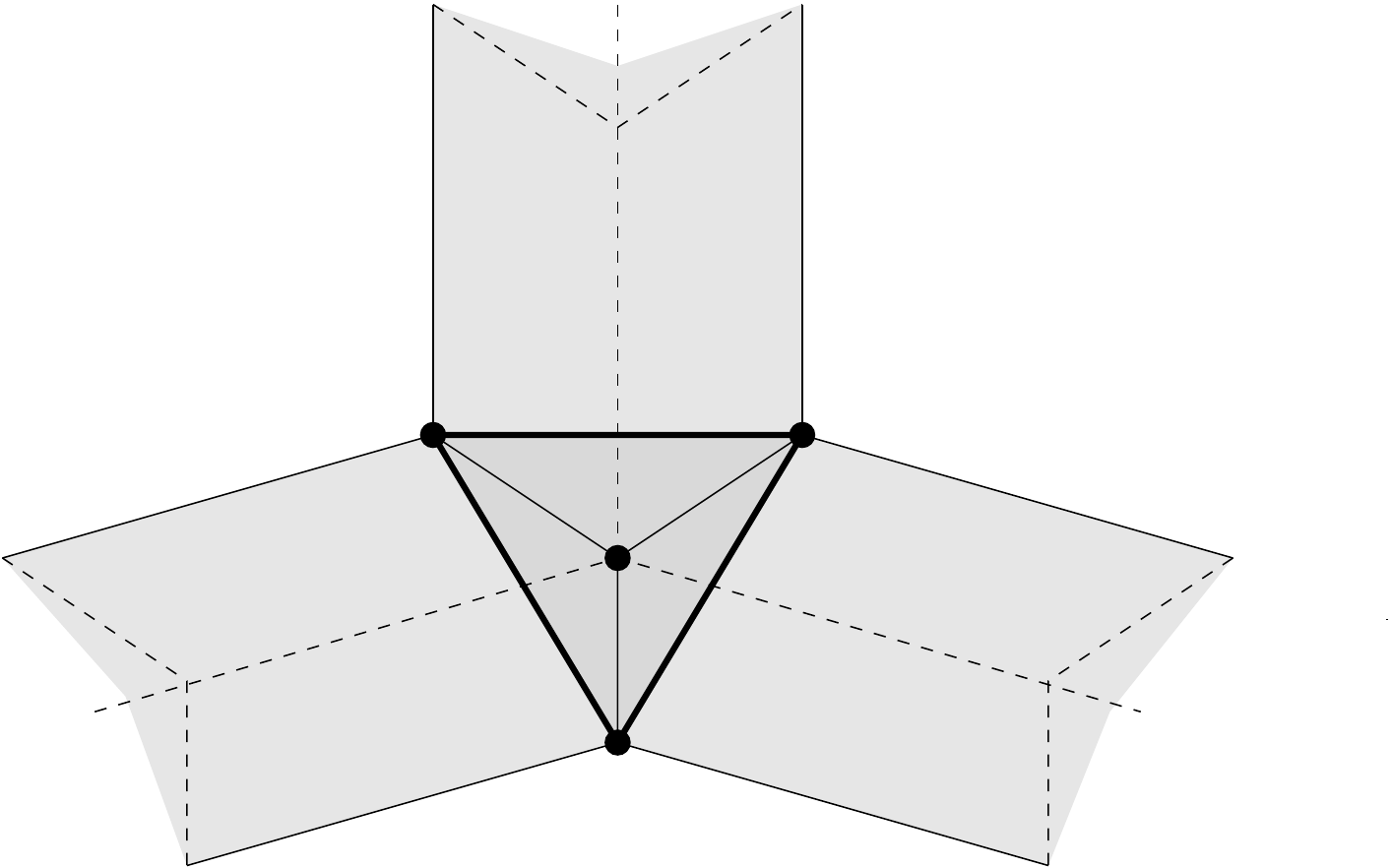}
\end{center}

\begin{lemma}\label{Newton}
If $\varphi$ is well-presented, then the corresponding Newton region is the (non-overlapping)
union over all proper subsets $I=\{i_1,\dots,i_\ell\}\subsetneq \{1,\dots,n\}$ of the generalized 
simplices with infinite vertices at $\ua_{i_1},\dots, \ua_{i_\ell}$ and finite vertices at the
origin and the rows $\um_k$ of $M_\varphi$ for $k\not\in I$.
\end{lemma}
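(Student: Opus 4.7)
The strategy is to recognize the $T_I$'s as the closed cones from the origin over the facets of $\partial N^c$ visible from the origin. The starting observation is that $N^c = \mathrm{conv}(\um_1, \dots, \um_n) + \Rbb_{\geq 0}^n$ is upper-closed under componentwise inequality ($p\in N^c$, $q\geq p$ componentwise $\Rightarrow q \in N^c$); the contrapositive makes $N$ lower-closed, and in particular star-shaped with respect to the origin. Consequently $N$ is exhausted by the closed cones from the origin over the portion of $\partial N^c$ visible from $0$. For each proper subset $I \subsetneq \{1,\dots,n\}$, set
\[
F_I := \mathrm{conv}(\um_k : k \notin I) + \sum_{i\in I}\Rbb_{\geq 0}\,\ue_i,
\]
and verify by direct computation that $T_I$ equals the closed cone from the origin over $F_I$, with the infinite vertices of $T_I$ encoding the unbounded directions of $F_I$.

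The key claim is that $\{F_I\}_I$ is precisely the family of facets of $\partial N^c$ visible from the origin; this is where well-presentedness becomes indispensable. The projection condition implies that, under $\pi_I$, the rows $\um_k$ with $k\in I$ are dominated by convex combinations of $\{\pi_I(\um_k) : k\notin I\}$, so the translates of $\mathrm{conv}(\um_k : k \notin I)$ along $+\ue_i$ for $i \in I$ remain on $\partial N^c$, which gives $F_I \subseteq \partial N^c$. The sign condition on $\det M^I$ then controls dimensionality: when $\det M^I \neq 0$, $F_I$ is a genuine $(n-1)$-dimensional facet and $T_I$ has positive $n$-dimensional volume; when $\det M^I = 0$, $F_I$ collapses to a lower-dimensional set and $T_I$ sits in a coordinate hyperplane, contributing measure zero, as happens for $|I|\geq 2$ in the Cremona Example~\ref{cremo}. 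Together with star-shapedness this yields $N = \bigcup_I T_I$, and the interiors are pairwise disjoint because distinct lower facets of a convex region have disjoint relative interiors, while radial projection from the origin carries the interior of $T_I$ onto the relative interior of $F_I$.

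The principal obstacle is rigorously justifying the facet classification. The cleanest organization is by induction on $n$: the projection condition reduces the local structure of $\partial N^c$ along the $I$-directions to that of a Newton outer region in dimension $n-|I|$, to which the inductive hypothesis applies, since the projected map inherits well-presentedness directly from the two hypotheses. An alternative is to classify lower support hyperplanes of $N^c$ directly: an outward normal $v\in \Rbb_{\leq 0}^n \setminus \{0\}$ has vanishing pattern $I := \{i : v_i = 0\}$, and well-presentedness ensures that for each proper $I$ the corresponding support face exists and equals $F_I$, up to the controlled degeneracy when $\det M^I = 0$.
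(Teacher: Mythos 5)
Your plan is essentially the paper's argument, made more explicit: the paper also decomposes $N$ into the cones $T_I$ from the origin (implicitly using the star-shapedness you articulate via the upper-closedness of $N^c$) and invokes the projection requirement of well-presentedness to determine, for each proper $I$, that the rows $\um_k$ with $k\notin I$ are the finite vertices accompanying the infinite directions $\ua_i$, $i\in I$. Your organization around visible facets of $\partial N^c$ is a cleaner rendering of the paper's terse ``It is clear that $N$ may be decomposed\ldots'' plus its projection step, not a different route. Two small details worth correcting: the projection condition says that $\pi_I(\um_i)$ for $i\in I$ \emph{dominates} (is componentwise $\ge$) some convex combination of $\{\pi_I(\um_k): k\notin I\}$, not that it is dominated by one; and when $\det M^I=0$ it is not that $F_I$ collapses dimensionally or that $T_I$ lies in a coordinate hyperplane --- rather, the affine span of the projected face passes through the origin, so the cone $T_I$ is degenerate and $\hVol(T_I)=0$. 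Note also that the sign condition on $\det M^I$ is not actually needed for this lemma (the paper's proof uses only the projection condition); it enters only later, to identify $\hVol(T_I)$ with $(-1)^{n-1-|I|}\det M^I$.
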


The reader is invited to verify this statement on the Cremona example depicted above.
There are $7$ proper subsets of $\{1,2,3\}$; the tetrahedron in the middle corresponds
to $I=\emptyset$; the three infinite simplices to the singletons; and the three simplices 
corresponding to the remaining three subsets are empty.
More generally, the following picture (still for $n=3$) may help in visualizing the content of 
Lemma~\ref{Newton}:
\begin{center}
\includegraphics[scale=.5]{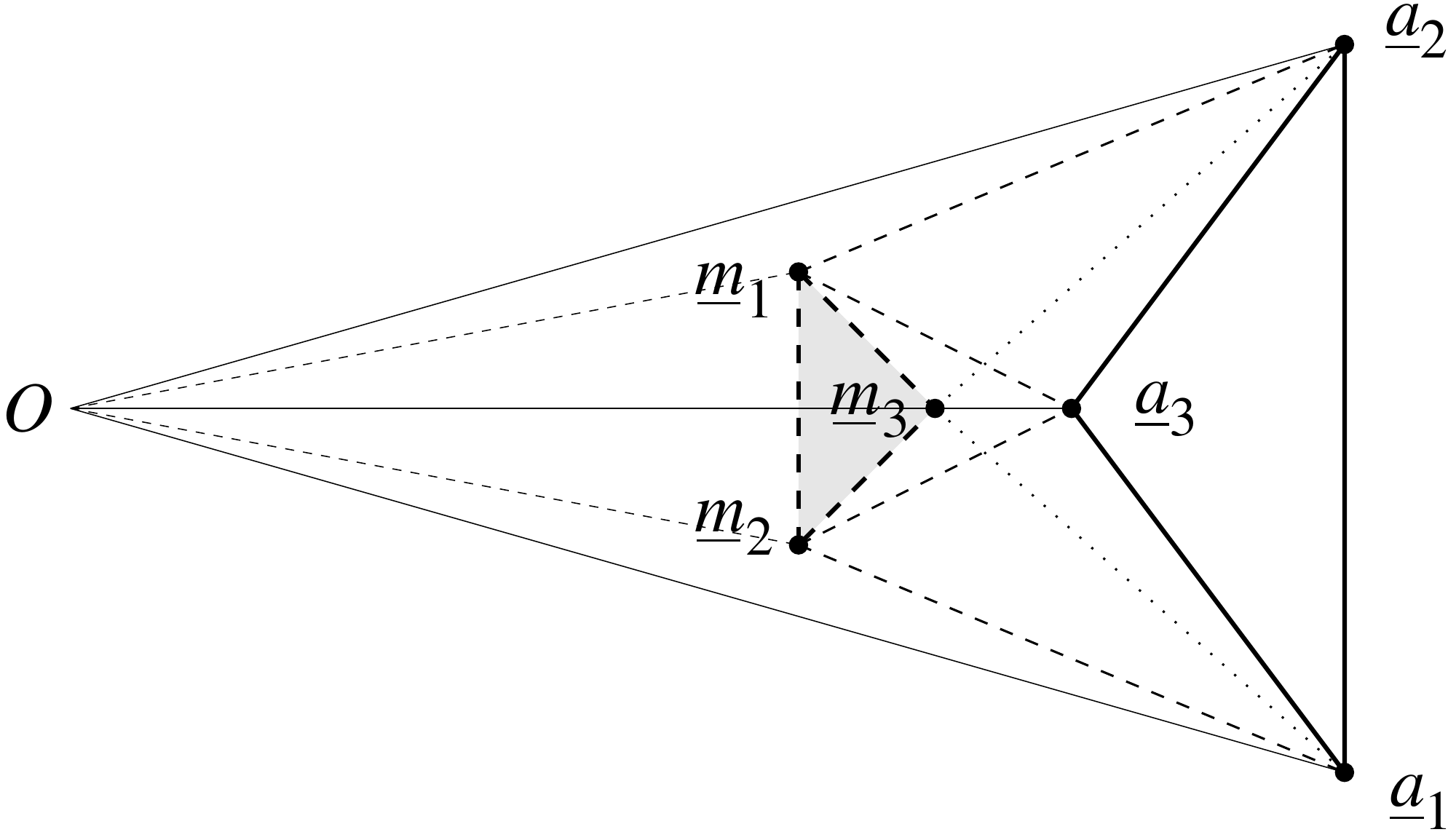}
\end{center}
The positive orthant is represented by the simplex $O\ua_1\ua_2\ua_3$, and $\ua_3$
points towards the reader. The triangle $\um_1\um_2\um_3$ is contained in the orthant,
and $\um_3$ points away from the reader. The region
between the triangles $\um_1\um_2\um_3$ and $\ua_1\ua_2\ua_3$ is the region $N^c$.
The region~$N$ is its complement, i.e., the union of the simplices $O\um_1\ua_2\ua_3$, 
$O\um_1\um_2\ua_3$, etc., as prescribed by the lemma. 
In the Cremona case of Example~\ref{cremo}, the vertices $\um_1$, $\um_2$, $\um_3$ are 
on the faces $O\ua_2\ua_3$, $O\ua_1\ua_3$, $O\ua_1\ua_2$, respectively, so three 
of the simplices are degenerate.

\begin{proof}
It is clear that $N$ may be decomposed as a union of simplices with vertices at 
the origin $O$, at some subset of infinite directions $\ua_i$, $i\in I\subseteq 
\{1,\dots, n\}$, and at a subset of the rows of $M$. Also, $I$ cannot consist of 
the whole $\{1,\dots,n\}$, since the generalized simplex with vertices $O,\ua_1,\dots,
\ua_n$ is the whole positive orthant. For $I=\{i_1,\dots, i_\ell\}$ properly contained
in $\{1,\dots, n\}$, 
we have to determine the set of $n-\ell$ rows $\um_k$ completing $O,\ua_{i_1},
\dots, \ua_{i_\ell}$ to a simplex $T$ contained in $N$. For this, we project along $I$.
For $i\in I$, $\pi_I(\um_i)$ is in the outer region determined by the projections 
$\pi_I(\um_k)$ for $k\not\in I$, by the first requirement listed in~\S\ref{wp}.
It follows that these latter $n-\ell$ rows must be the complementary set of vertices
of~$T$, and this is the assertion of the statement.
\end{proof}

\subsection{}
Lemma~\ref{Newton} and volume computations give the following result.
Recall that $G$ denotes the {\em multidegree class,\/} cf.~\S\ref{sec:multclass},
and that for $I\subseteq \{1,\dots,n\}$, $M^I$ denotes the matrix obtained by
removing the $i$-th row and column from $M$ for all $i\in I$.

\begin{corol}
If $\varphi$ is well-presented by the matrix $M=M_\varphi$, then
\begin{equation}\label{eq:prechar}
G=c(\cL^*)^{-1}\cap \left(\sum_{I\subseteq \{1,\dots,n\}} (-1)^{n-|I|} \det M^I 
\prod_{k\not\in I} X_k\right)\quad.
\end{equation}
\end{corol}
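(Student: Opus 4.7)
The plan is to combine Lemma~\ref{Newton} with the Segre-class integral of Lemma~\ref{Segre}, and then pass to the multidegree class via Lemma~\ref{multclass}. The first main step is to compute $i_*s(S,V)$ explicitly from the simplicial decomposition of $N$; the second is to translate that back into $G$ using the $\otimes_V \cL^*$ operation.

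More concretely, since the integrand of Lemma~\ref{Segre} yields $[V]=1$ when integrated over the full positive orthant, I have $i_*s(S,V) = \int_N$ of the same integrand. By Lemma~\ref{Newton}, well-presentedness makes $N$ the non-overlapping union of the simplices $T_I$ indexed by proper subsets $I\subsetneq\{1,\dots,n\}$, with finite vertices $O$ and $\um_k$ ($k\notin I$) and infinite vertices $\ua_i$ ($i\in I$). I then apply Lemma~\ref{simplexco} to each $T_I$, using three simple observations: (i) the projection of $T_I$ along the $I$-directions has vertices $O$ and $\pi_I(\um_k)$ for $k\notin I$, so up to row reordering its normalized volume is $|\det M^I|$; (ii) the numerator factor $X_1\cdots X_n/\prod_{k\in I} X_k$ collapses to $\prod_{k\notin I} X_k$; and (iii) at each finite vertex $\um_k$, the relation $m_{k1}X_1+\cdots+m_{kn}X_n = c_1(\cL)$ turns the $s+1$ factors in the denominator of Lemma~\ref{simplexco} into $(1+c_1(\cL))^{n-|I|}$ (the vertex $O$ contributes $1$).

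The sign hypothesis then lets me rewrite $|\det M^I|=(-1)^{n-1-|I|}\det M^I$, so
\[
i_*s(S,V)=\sum_{I\subsetneq\{1,\dots,n\}}\frac{(-1)^{n-1-|I|}\det M^I\prod_{k\notin I} X_k}{(1+c_1(\cL))^{n-|I|}}.
\]
A short bookkeeping check shows that the would-be $I=\{1,\dots,n\}$ summand (with the empty-matrix convention $\det M^I=1$ and empty products equal to $1$) equals exactly $-1$, which absorbs the $[V]=1$ in $[V]-i_*s(S,V)$ and rearranges signs to give
\[
[V]-i_*s(S,V)=\sum_{I\subseteq\{1,\dots,n\}}\frac{(-1)^{n-|I|}\det M^I\prod_{k\notin I} X_k}{(1+c_1(\cL))^{n-|I|}}.
\]

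The final step is to apply $\otimes_V\cL^*$ termwise and then Lemma~\ref{multclass}. The crux of this step is the identity
\[
\frac{\alpha}{(1+c_1(\cL))^p}\otimes_V\cL^*=\alpha\qquad\text{for any $\alpha$ of pure codimension $p$,}
\]
which I verify by expanding $(1+c_1(\cL))^{-p}$ as a power series, applying the definition $\beta\otimes_V\cL^*=c(\cL^*)^{-\rk}\cap\beta$ on each graded piece, and collapsing via the elementary identity $(1-x)^{-p}(1+x/(1-x))^{-p}=1$ with $x=c_1(\cL)$. Applied to $\alpha=(-1)^{n-|I|}\det M^I\prod_{k\notin I}X_k$ (of codimension $n-|I|$), this annihilates all of the $(1+c_1(\cL))^{n-|I|}$ denominators and yields precisely the sum in~\eqref{eq:prechar}; multiplying by $c(\cL^*)^{-1}$ via Lemma~\ref{multclass} concludes. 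The main obstacle I anticipate is the sign and empty-subset bookkeeping in the first step; once this is organized, the identity above and the direct application of Lemma~\ref{multclass} close the proof without further computation.
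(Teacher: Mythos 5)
Your proof is correct, and it takes a route that differs from the paper's in a meaningful but not radical way: the difference is \emph{when} the $\otimes_V\cL^*$ operation is performed. You work directly with the untranslated integrand of Lemma~\ref{Segre}, integrate it over the decomposition of $N$ from Lemma~\ref{Newton} (getting the explicit expression for $i_*s(S,V)$ with $(1+c_1(\cL))^{n-|I|}$ in the denominators), and only then apply $\otimes_V\cL^*$ term by term, relying on the auxiliary identity
\[
\frac{\alpha}{(1+c_1(\cL))^p}\otimes_V\cL^* = \alpha \quad\text{for $\alpha$ of pure codimension $p$,}
\]
which you verify correctly; the $I=\{1,\dots,n\}$ bookkeeping is also handled correctly. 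The paper instead performs the $\otimes_V\cL^*$ manipulation once and for all at the level of the integrand (this is exactly equation~\eqref{eq:premultclass} from~\S\ref{sec:proof}), so that it already has $G=\int_{N^c}$ of the \emph{translated} integrand and just needs $N=$ orthant $\setminus N^c$: with the translated integrand, each simplex $T_I$ contributes a denominator $c(\cL^*)\cdot 1^{n-|I|}=c(\cL^*)$ immediately, with no need for the auxiliary identity. Your version makes the mechanism of the cancellation explicit and self-contained, at the cost of a small extra lemma; the paper's version is shorter because it leans on work already done in \S\ref{sec:proof}. Both are sound. One minor remark worth recording: your observation~(i), that the projected volume of $T_I$ equals $|\det M^I|$, implicitly uses that the row indices $k\notin I$ coincide with the column indices $j\notin I$ when forming $M^I$, which is exactly the convention the paper adopts; make sure that step is stated rather than left to ``up to row reordering.''
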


\begin{proof}
By~\eqref{eq:premultclass} in~\S\ref{sec:proof},
\[
G=\int_{N^c} \frac{n!\, X_1\cdots X_n\, da_1\cdots da_n}
{(1+(a_1-m_{n1}) X_1+\cdots +(a_n-m_{nn}) X_n)^{n+1}}
\]
where we have used $\um_n$ as pivot.
As $N$ is the complement of $N^c$ in the positive orthant, and the integral over
the orthant is $c(\cL^*)$,
the decomposition of $N$ obtained in Lemma~\ref{Newton} and the formula for
integrals over simplices (Lemma~\ref{simplexco}; remember that $O$ is a vertex
of each simplex) give
\[
G = c(\cL^*)^{-1}\cap \left(1-\sum_{I\subsetneq \{1,\dots,n\}} \hVol T_I X_{T_I}
\right)\quad,
\]
where $T_I$ is the simplex with vertices at the origin, $\ua_i$ for $i\in I$, and $\um_k$
for $k\not\in I$. Now $\hVol(T_I)$ is, by definition, the normalized volume of the 
simplex spanned by the finite vertices of $T_I$; thus, it equals $\pm$ the determinant
of the matrix obtained by replacing, for $i\in I$, the $i$-th row of $M$ with the
standard basis vector $\ue_i$. In other words, this volume equals $\pm \det M^I$,
and by the second requirement listed in \S\ref{wp} we have
\[
\hVol(T_I) = (-1)^{n-1-|I|} \det M^I
\]
for $I$ properly contained in $\{1,\dots,n\}$. Thus
\[
G = c(\cL^*)^{-1}\cap \left(1+\sum_{I\subsetneq \{1,\dots,n\}} (-1)^{n-|I|}\det M^I X_{T_I}
\right)\quad. 
\]
The statement follows, putting $\det M^I=1$ for $I=\{1,\dots,n\}$
(the `empty matrix').
\end{proof}

\subsection{}
Since the nonzero components of the class $G$ have codimension at most $n-1$, the 
right-hand side of~\eqref{eq:prechar} is necessarily a polynomial of degree less than $n$ 
in the $X_j$'s, once these are evaluated to the corresponding classes in $A_*V$. 
As we will see, this fact has a(n~even) more direct explanation.

We consider the matrix $M_\varphi(X)$ whose rows are the vectors
\[
(m_{i1}X_1, \cdots, m_{in}X_n)
\]
As usual, the $X_j$'s are considered as parameters at first, and will eventually be evaluated 
to the corresponding classes in $A_*V$. For the considerations which follow, we impose 
that 
\begin{equation}\label{eq:basrel}
m_{11}X_1 + m_{12} X_2 +\cdots + m_{1n} X_n = \cdots = 
m_{n1}X_1 + m_{n2} X_2 +\cdots + m_{nn} X_n\quad . 
\end{equation}
Once the $X_j$ are replaced with the corresponding classes $c_1(\cL_j)$ in $A_*V$, 
this common value is $c_1(\cL)$ by assumption. This amounts to the
statement that the column vector $\begin{pmatrix} 1 \\ \vdots \\ 1 \end{pmatrix}$ is
an eigenvector of $M_\varphi(X)$, with eigenvalue $c_1(\cL)$. Thus, the characteristic
polynomial of~$M_\varphi(X)$ has a factor of $(t-c_1(\cL))$:
\begin{equation}\label{eq:factordec}
\det(t\,I - M_\varphi(X)) = (t-c_1(\cL))\cdot Q(t)
\end{equation}
for a polynomial $Q(t)$ of degree $n-1$. On the other hand, the characteristic polynomial
of $M_\varphi(X)$ is precisely the term in parentheses appearing in~\eqref{eq:prechar}:

\begin{lemma}\label{ella}
\[
\det(t\,I-M_\varphi(X)) 
= \sum_{I\subseteq \{1,\dots,n\}} t^{|I|}(-1)^{n-|I|} \det M^I \prod_{k\not\in I} X_k\quad.
\]
\end{lemma}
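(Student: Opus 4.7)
The plan is to recognize the right-hand side as the principal-minor expansion of the characteristic polynomial of $M_\varphi(X)$, and to exploit the fact that $M_\varphi(X)$ is obtained from $M_\varphi$ by right-multiplication by a diagonal matrix.

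First I would invoke the standard identity: for any $n\times n$ matrix $A$,
\[
\det(t\,I - A) = \sum_{I\subseteq\{1,\dots,n\}} (-1)^{n-|I|}\, t^{|I|}\, \det A^I,
\]
where $A^I$ denotes the principal submatrix obtained by deleting the rows and columns indexed by $I$. This follows from multilinearity of the determinant in the columns of $t\,I - A$: writing each column as the sum of the corresponding column of $t\,I$ and of $-A$ and expanding, the term in which the $t$-columns indexed exactly by $I$ are chosen contributes $t^{|I|} \cdot (-1)^{n-|I|} \det A^I$ after cofactor expansion along those columns.

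Next I would observe that since the $(i,j)$-entry of $M_\varphi(X)$ is $m_{ij} X_j$, one has the factorization $M_\varphi(X) = M_\varphi \cdot D$, where $D$ is the diagonal matrix with entries $X_1,\dots,X_n$. This factorization passes to principal submatrices: for any $I\subseteq \{1,\dots,n\}$, the submatrix of $M_\varphi(X)$ obtained by removing the rows and columns indexed by $I$ is $M^I \cdot D^I$, where $D^I$ is the diagonal matrix with entries $X_k$ for $k\notin I$. Multiplicativity of the determinant then gives
\[
\det M_\varphi(X)^I = \det M^I \cdot \prod_{k\notin I} X_k.
\]
Substituting this into the principal-minor expansion of the previous paragraph yields the claim.

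The argument is purely linear-algebraic; in particular it uses neither the well-presentedness hypothesis nor the eigenvector relation~\eqref{eq:basrel}, both of which enter only in the surrounding discussion that produces the factorization~\eqref{eq:factordec}. There is thus no substantive obstacle — the only care needed is to track the convention that $I$ labels the rows and columns \emph{removed} from $M_\varphi$, so that $|I|$ becomes the exponent of $t$ and $n-|I|$ is the size of the surviving minor $M^I$.
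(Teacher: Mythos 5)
Your proof is correct. The paper's proof consists only of the remark that this is elementary linear algebra, and your argument is the standard elementary argument one would supply: the principal-minor expansion of $\det(tI-A)$ (obtained by multilinearity of the determinant in the columns of $tI-A$), combined with the observation that $M_\varphi(X)=M_\varphi\cdot\operatorname{diag}(X_1,\dots,X_n)$, which passes to principal submatrices and gives $\det M_\varphi(X)^I=\det M^I\prod_{k\notin I}X_k$. You are also right that neither well-presentedness nor the eigenvector relation is needed here; they only enter the surrounding corollaries.
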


\begin{proof}
This is elementary linear algebra.
\end{proof}

It follows that $G$ may be computed directly from the polynomial $Q(t)$:
\begin{corol}
If $\varphi$ is well-presented, and with notation as above,
$G=Q(1)$.
\end{corol}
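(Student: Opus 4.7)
The plan is to combine the previous corollary (formula \eqref{eq:prechar}) with Lemma~\ref{ella} specialized at $t=1$, and then use the factorization \eqref{eq:factordec} to see an exact cancellation with the $c(\cL^*)^{-1}$ prefactor.

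Concretely, the first step is to set $t=1$ in Lemma~\ref{ella}. This yields
\[
\det\bigl(I-M_\varphi(X)\bigr)
=\sum_{I\subseteq\{1,\dots,n\}} (-1)^{n-|I|}\det M^I\prod_{k\notin I} X_k,
\]
which is exactly the expression in parentheses on the right-hand side of~\eqref{eq:prechar}. Thus, if $\varphi$ is well-presented,
\[
G=c(\cL^*)^{-1}\cap \det\bigl(I-M_\varphi(X)\bigr).
\]

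The second step is to evaluate the factorization \eqref{eq:factordec} at $t=1$, obtaining
\[
\det\bigl(I-M_\varphi(X)\bigr)=\bigl(1-c_1(\cL)\bigr)\cdot Q(1).
\]
Since $\cL$ is a line bundle, $c(\cL^*)=1+c_1(\cL^*)=1-c_1(\cL)$, so the linear factor is precisely $c(\cL^*)$. Substituting back, the two Chern class factors cancel:
\[
G=c(\cL^*)^{-1}\cap \bigl(c(\cL^*)\cdot Q(1)\bigr)=Q(1),
\]
which is the claim.

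There is essentially no obstacle beyond checking that the manipulations are legitimate in $A_*V$: one has to confirm that the relation \eqref{eq:basrel} is what entitles us to identify the eigenvalue of the all-ones vector with $c_1(\cL)$ after substituting the $X_j$ by their Chow classes (this is precisely the assumption that all monomials are sections of the same $\cL$, as set up in~\S\ref{monomialsetup}), and that the formal identity $c(\cL^*)^{-1}\cdot c(\cL^*)=1$ may be applied after capping. Both are immediate, so the argument is really a two-line synthesis of the earlier corollary, Lemma~\ref{ella}, and the definition of $Q(t)$.
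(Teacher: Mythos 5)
Your proposal is correct and follows exactly the paper's own argument: evaluate Lemma~\ref{ella} at $t=1$ to identify the parenthesized sum in~\eqref{eq:prechar} with $\det(I-M_\varphi(X))$, then use~\eqref{eq:factordec} at $t=1$ and the identity $c(\cL^*)=1-c_1(\cL)$ to cancel the $c(\cL^*)^{-1}$ prefactor. Nothing to add.
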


\begin{proof}
By Lemma~\ref{ella}, this follows from~\eqref{eq:prechar} and~\eqref{eq:factordec}:
the factor $(t-c_1(\cL))$ equals $c(\cL^*)$ for $t=1$. 
\end{proof}

\subsection{}\label{sec:char}
We will now identify $Q(t)$ itself as a characteristic polynomial.
Let $M''_\varphi(X)$ be the matrix obtained by subtracting the last row of $M_\varphi(X)$
from the others, and discarding the last row and column:
\[
\begin{pmatrix}
m_{11} X_1 & m_{12} X_2 & m_{13} X_3  \\
m_{21} X_1 & m_{22} X_2 & m_{23} X_3  \\
m_{31} X_1 & m_{32} X_2 & m_{33} X_3 
\end{pmatrix}
\leadsto
\begin{pmatrix}
(m_{11}-m_{31}) X_1 & (m_{12}-m_{42}) X_2 \\
(m_{21}-m_{31}) X_1 & (m_{22}-m_{42}) X_2
\end{pmatrix}
\]

\begin{lemma}\label{Qaschar}
\[
Q(t)=\det(t\,I-M''_\varphi(X))\quad.
\]
\end{lemma}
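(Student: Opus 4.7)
The plan is to read $Q(t)$ as the characteristic polynomial of the operator that $M_\varphi(X)$ induces on the quotient of $\Rbb^n$ by the invariant line spanned by $v=(1,\dots,1)^T$, and then recognize $M''_\varphi(X)$ as the matrix of that induced operator in a natural basis. The relation \eqref{eq:basrel} says precisely that $M_\varphi(X)\,v = c_1(\cL)\,v$, so $\langle v\rangle$ is an eigenline and, by \eqref{eq:factordec}, the characteristic polynomial of the induced map on $\Rbb^n/\langle v\rangle$ is exactly $Q(t)$.

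Concretely, I would conjugate $M_\varphi(X)$ by the matrix $P$ whose columns are $e_1,\dots,e_{n-1},v$. Since $Pe_n=v$, the last column of $P^{-1}M_\varphi(X)P$ equals $c_1(\cL)\,e_n$. For $j<n$, the vector $M_\varphi(X)\,e_j=(m_{1j}X_j,\dots,m_{nj}X_j)^T$; applying $P^{-1}$ (which subtracts the last coordinate from each of the first $n-1$ coordinates and leaves the last coordinate alone) yields
\[
\bigl((m_{1j}-m_{nj})X_j,\,\dots,\,(m_{n-1,j}-m_{nj})X_j,\,m_{nj}X_j\bigr)^T.
\]
Reading off the first $n-1$ entries over $j=1,\dots,n-1$ produces precisely $M''_\varphi(X)$, while the bottom row is the $\ast$ that does not affect the characteristic polynomial. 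Hence
\[
P^{-1}M_\varphi(X)P=\begin{pmatrix} M''_\varphi(X) & 0 \\ \ast & c_1(\cL)\end{pmatrix},
\]
and taking $\det(tI-\,\cdot\,)$ on both sides gives $\det(tI-M_\varphi(X)) = (t-c_1(\cL))\cdot\det(tI-M''_\varphi(X))$. Comparing with \eqref{eq:factordec} yields $Q(t)=\det(tI-M''_\varphi(X))$, as claimed.

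There is no substantive obstacle: the only thing to verify is the block shape of $P^{-1}M_\varphi(X)P$, which reduces to the column computation above together with the eigenvector identity. An equivalent, slightly more pedestrian route would be to apply the row operations $R_i\mapsto R_i-R_n$ ($i<n$) and then the column operation $C_n\mapsto C_n+C_1+\dots+C_{n-1}$ directly to $tI-M_\varphi(X)$; the column operation kills the top $n-1$ entries of column $n$ by \eqref{eq:basrel}, leaving the same block-triangular matrix. Either presentation boils down to the same one-line linear-algebra identity.
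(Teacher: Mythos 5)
Your proof is correct and is essentially the paper's own argument: the paper proves Lemma~\ref{Qaschar} by exactly this conjugation, changing basis to $\ue_1,\dots,\ue_{n-1},\ue_1+\cdots+\ue_n$ (i.e.\ conjugating by your $P$), obtaining the block-triangular form $\begin{pmatrix} M''_\varphi(X) & 0 \\ * & c_1(\cL)\end{pmatrix}$ and comparing with~\eqref{eq:factordec}. Your column-by-column verification and the row/column-operation variant are just more explicit presentations of the same one-line computation.
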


\begin{proof}
This is also elementary linear algebra. Performing a change of basis from the standard
basis to
$\ue_1,\dots, \ue_{n-1}, \ue_1+\cdots + \ue_n$, $M_\varphi(X)$ is transformed into
\[
\begin{pmatrix}
1 & 0 & \cdots & 0 & -1 \\
0 & 1 & \cdots & 0 & -1 \\
\vdots & \vdots & \ddots & \vdots & \vdots \\
0 & 0 & \cdots & 1 & -1 \\
0 & 0 & \cdots & 0 & 1 \\
\end{pmatrix}
\cdot M_\varphi(X)\cdot
\begin{pmatrix}
1 & 0 & \cdots & 0 & 1 \\
0 & 1 & \cdots & 0 & 1 \\
\vdots & \vdots & \ddots & \vdots & \vdots \\
0 & 0 & \cdots & 1 & 1 \\
0 & 0 & \cdots & 0 & 1 \\
\end{pmatrix}=
\begin{pmatrix}
M''_\varphi(X) & 0 \\
* & c_1(\cL)
\end{pmatrix}
\]
hence
\[
\det(t\,I - M_\varphi(X)) = (t-c_1(\cL))\cdot \det(t\,I - M''_\varphi(X))\quad.
\]
The result follows by comparing with~\eqref{eq:factordec}.
\end{proof}

\begin{corol}\label{charact}
With notation as above, and assuming that $\varphi$ is well-presented, the multidegree 
class of $\varphi$ is obtained by evaluating the characteristic polynomial for the matrix 
$M''_\varphi(X)$ at $t=1$.
\end{corol}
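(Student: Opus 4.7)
The plan is to deduce this statement directly by chaining together the two results immediately preceding it, so the proof will be essentially a one-line citation; the substantive work has already been carried out in establishing those intermediate steps.

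First, I would invoke the corollary just above Lemma~\ref{Qaschar}, which gives $G = Q(1)$, where $Q(t)$ is the polynomial appearing in the factorization
\[
\det(t\,I - M_\varphi(X)) = (t - c_1(\cL)) \cdot Q(t)
\]
of \eqref{eq:factordec}. This corollary already encodes the well-presented hypothesis, via Lemma~\ref{Newton} and the simplex decomposition of $N$, and also builds in the relation \eqref{eq:basrel} that makes the all-ones vector an eigenvector of $M_\varphi(X)$ with eigenvalue $c_1(\cL)$.

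Next, I would apply Lemma~\ref{Qaschar}, which identifies the factor $Q(t)$ as the characteristic polynomial of the reduced matrix $M''_\varphi(X)$, i.e.
\[
Q(t) = \det(t\,I - M''_\varphi(X)).
\]
Combining these two identities and setting $t = 1$ yields
\[
G = Q(1) = \det(I - M''_\varphi(X)),
\]
which is precisely the value at $t=1$ of the characteristic polynomial of $M''_\varphi(X)$. As is standard in this paper, the expression on the right is to be understood as a polynomial in the $X_j$ computed symbolically, and then interpreted as a class in $A_*V$ by substituting each $X_j$ with the corresponding $c_1(\cL_j)$.

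There is no real obstacle: the only thing to be careful about is the convention on evaluating the polynomial expression in the $X_j$ as a class in $A_*V$, but this has been the running convention throughout Sections~\ref{pf1}--\ref{pf3}, so no additional argument is needed.
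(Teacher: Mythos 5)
Your proof is correct and is exactly the chain the paper intends: combine the preceding corollary $G = Q(1)$ with Lemma~\ref{Qaschar}'s identification $Q(t) = \det(t\,I - M''_\varphi(X))$. The paper omits an explicit proof of Corollary~\ref{charact} precisely because it is this immediate two-step citation, so your argument matches the paper's approach.
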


\begin{remark}
Note that $X_n$ does not appear in $M''_\varphi(X)$, and hence in the expression for the
multidegree class obtained in Corollary~\ref{charact}. This is not too surprising, given
the redundancy built into the classes of the hypersurfaces $X_i$
(cf.~\eqref{eq:basrel}). Of course we could use the $i$-th row as pivot, and this would
yield an expression for the multidegree class in which $X_i$ does not appear.
\qede\end{remark}

\subsection{}\label{sec:cremona}
The matrix $M''_\varphi(X)$ has a compelling interpretation in the case of
rational maps $\varphi:\Pbb^{n-1}\dashrightarrow \Pbb^{n-1}$ whose components
are monomials in the homogeneous coordinates $x_1,\dots, x_n$. Every such map 
may be obtained by homogenizing a monomial morphisms of $(n-1)$-tori:
\begin{equation}\label{torimap}
\alpha:\quad (x_1,\dots, x_{n-1}) \mapsto (x_1^{a_{11}} \cdots x_{n-1}^{a_{1, n-1}} , \dots,
x_1^{a_{n-1,1}} \cdots x_{n-1}^{a_{n-1, n-1}})
\end{equation}
with $a_{ij}\in \Zbb$. A homogenization may be performed (for example) by
multiplying each monomial by a power of $x_n$ to obtain monomials of common 
degree~$0$, then multiplying each monomial by a common factor to obtain
nonnegative exponents throughout.

\begin{example}
Applying this procedure to the monomial map of tori
\[
(x_1,x_2,x_3) \mapsto (x_1^{-1} x_3, x_2^{-2}, x_2 x_3)
\]
gives the following rational map $\Pbb^3 \dashrightarrow \Pbb^3$:
\[
(x_1:x_2:x_3:x_4) \mapsto (x_1^{-1} x_3: x_2^{-2} x_4^2: x_2 x_3 x_4^{-2}:1)
= (x_2^2 x_3 x_4^2: x_1 x_4^4: x_1x_2^3 x_3:x_1 x_2^2 x_4^2)\quad.
\]
\end{example}

We say that $\alpha$ is well-presented if its homogenization is (in the sense 
specified in~\S\ref{wp}). We are ready to prove Theorem~\ref{pcchar} from the 
introduction:

\begin{theorem}\label{torimd}
Let $\alpha$ be a map of $(n-1)$-tori, let $A=(a_{ij})$ be the $(n-1)\times
(n-1)$ matrix of exponents, let $P_A(t)=\det(t\,I-A)$ be the characteristic
polynomial of $A$, and let $\gamma_\alpha(t)$ be the multidegree polynomial
for the corresponding rational map $\Pbb^{n-1}\dashrightarrow \Pbb^{n-1}$.
Assume that $\alpha$ is well-presented. Then
\[
\gamma_\alpha(t) = t^{n-1} P_A\left(\frac 1t\right)\quad.
\]
\end{theorem}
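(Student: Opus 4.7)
The plan is to reduce Theorem~\ref{torimd} to Corollary~\ref{charact} by computing $M''_\varphi(X)$ explicitly when $\varphi:\Pbb^{n-1}\dashrightarrow \Pbb^{n-1}$ is the homogenization of the torus map $\alpha$, with $V=\Pbb^{n-1}$ and the sections $s_j$ equal to the homogeneous coordinates $x_1,\dots,x_n$ (so that each $X_j$ is a coordinate hyperplane, of class $h$).

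First I would unwind the homogenization procedure. The $i$-th component of $\alpha$ (for $i=1,\dots,n-1$) is turned into a degree-$0$ Laurent monomial by multiplication by $x_n^{-\sum_j a_{ij}}$, and the $n$-th component of $\varphi$ is the constant monomial $1$. Multiplying all $n$ resulting Laurent monomials by a common factor $x_1^{c_1}\cdots x_n^{c_n}$ to clear denominators produces the genuine monomials $\mu_i$ defining $\varphi$, with exponents $m_{ij}=a_{ij}+c_j$ for $i,j\in\{1,\dots,n-1\}$; $m_{in}=-\sum_{j=1}^{n-1} a_{ij}+c_n$ for $i\le n-1$; and $m_{nj}=c_j$ for every $j$. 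The arbitrary shifts $c_j$ cancel upon forming $M''_\varphi(X)$: indeed $m_{ij}-m_{nj}=a_{ij}$ for $i,j\in\{1,\dots,n-1\}$, and since every $X_j=h$ this identifies $M''_\varphi(X)=h\,A$, so that $\det(t\,I-M''_\varphi(X))=h^{n-1}\,P_A(t/h)$.

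Applying Corollary~\ref{charact}, the multidegree class is then $G=\det(I-M''_\varphi(X))=h^{n-1}P_A(1/h)$. Writing $P_A(t)=\sum_{k=0}^{n-1}c_k\,t^k$ (with $c_{n-1}=1$), the codimension-$\ell$ component of $G$ is $G_\ell=c_{n-1-\ell}\,h^\ell$; substituting into $\gamma_\alpha(t)=\sum_\ell(h^{(n-1)-\ell}\cdot G_\ell)\,t^\ell$ and using $h^{n-1}=[\mathrm{pt}]$ on $\Pbb^{n-1}$ gives $\gamma_\alpha(t)=\sum_\ell c_{n-1-\ell}\,t^\ell=t^{n-1}P_A(1/t)$, as claimed. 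The only genuine content is the bookkeeping in the second paragraph: keeping the homogenization honest and verifying that the common clearing monomial drops out in $M''_\varphi(X)$. Everything after that is formal, since Corollary~\ref{charact} has already performed all of the geometric heavy lifting, and the passage from the characteristic polynomial of $hA$ back to $P_A$ is just the scaling identity $\det(tI-hA)=h^{n-1}P_A(t/h)$.
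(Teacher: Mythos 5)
Your proof is correct and follows essentially the same route as the paper: identify $M''_\varphi(X)=h\cdot A$ after homogenization, apply Corollary~\ref{charact}, and finish by formal manipulation. You spell out the bookkeeping for the homogenization (showing the clearing monomial cancels) and the final rescaling $\det(tI-hA)=h^{n-1}P_A(t/h)$, which the paper leaves implicit; otherwise the argument is the same.
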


\begin{proof}
The matrix $h\cdot A$ obtained by multiplying each entry of $A$ by the hyperplane
class $h$ equals the matrix $M''_\varphi(X)$ for the rational map 
$\Pbb^{n-1} \dashrightarrow \Pbb^{n-1}$ obtained by homogenizing $\alpha$, and 
setting all $X_j$ to equal $h$. By Corollary~\ref{charact}, the multidegree class equals
$\det(I - h\cdot A)$, and the stated formula follows by formal manipulations.
\end{proof}

\begin{example}
The standard Cremona transformation corresponding to the map of tori
\[
(x_1,\dots, x_{n-1}) \mapsto (x_1^{-1},\dots, x_{n-1}^{-1})
\]
is well-presented (Example~\ref{cremo}). The exponent matrix is 
\[
A=\begin{pmatrix}
-1 & \cdots & 0 \\
\vdots & \ddots & \vdots \\
0 & \cdots & -1
\end{pmatrix}
\]
hence $P_A(t)=(t+1)^{n-1}$. According to Theorem~\ref{torimd}, its multidegree
polynomial is $\gamma_\alpha(t)=(1+t)^{n-1}$. Therefore its multidegrees are
$\gamma_\ell = \binom{n-1}\ell$ (cf.~\cite{MR2221122}, Theorem 2, 
and~\cite{Dolgachev},~\S3.4).
\qede\end{example}

\subsection{}
For more general well-presented monomial rational maps $V\dashrightarrow \Pbb^{n-1}$ 
based on $n$ hypersurfaces $X_1,\dots, X_n$, consider (as in \S\ref{pf1} and~\ref{pf2})
the Newton outer region $N_\varphi$, whose vertices are the rows of the $(n\times n)$
matrix $M'_\varphi(X)$ of translations $(\um_j-\um_n)\cdot \uX$:
\[
\begin{pmatrix}
m_{11} X_1 & m_{12} X_2 & m_{13} X_3  \\
m_{21} X_1 & m_{22} X_2 & m_{23} X_3  \\
m_{31} X_1 & m_{32} X_2 & m_{33} X_3  
\end{pmatrix}
\leadsto
\begin{pmatrix}
(m_{11}-m_{31}) X_1 & (m_{12}-m_{32}) X_2 & (m_{13}-m_{33}) X_3 \\
(m_{21}-m_{31}) X_1 & (m_{22}-m_{32}) X_2 & (m_{23}-m_{33}) X_3 \\
0 & 0 & 0 
\end{pmatrix}
\]
(Of course any row can serve as pivot.)

\begin{theorem}\label{charpolmd}
Let $\varphi$ be well-presented, and let $M'_\varphi(X)$ be as above, with characteristic
polynomial $P_{M'_\varphi(X)}(t)=\det(t\, I - M'_\varphi(X))$. Then the predegree polynomial 
of $\varphi$ is given by
\begin{equation}\label{eq:chargen}
\gamma_\varphi(t) = h^{\dim V-n} t^n P_{M'_\varphi(X)}\left(\frac ht\right)\quad.
\end{equation}
\end{theorem}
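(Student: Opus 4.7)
The plan is to reduce Theorem~\ref{charpolmd} to Corollary~\ref{charact} via a short determinantal identity relating $M'_\varphi(X)$ to $M''_\varphi(X)$. Since the $n$-th row of $M_\varphi$ was chosen as the pivot, the last row of $M'_\varphi(X)$ is identically zero, while the upper-left $(n-1)\times(n-1)$ block of $M'_\varphi(X)$ coincides with the matrix $M''_\varphi(X)$ of~\S\ref{sec:char}. Expanding $\det(tI_n - M'_\varphi(X))$ along the last row, whose only nonzero entry is the diagonal $t$, immediately yields
\[
P_{M'_\varphi(X)}(t) \;=\; t\cdot P_{M''_\varphi(X)}(t)\quad .
\]

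Next I would unpack Corollary~\ref{charact} into graded pieces. Every entry of $M''_\varphi(X)$ is linear in the $X_j$'s, so the coefficient of $t^{n-1-k}$ in $P_{M''_\varphi(X)}(t)$ is a homogeneous polynomial $c_k$ of degree $k$ in $X_1,\dots,X_n$; once the $X_j$'s are replaced by the classes of the hypersurfaces, $c_k$ becomes a class of codimension $k$ on $V$. Corollary~\ref{charact} asserts $G = P_{M''_\varphi(X)}(1)$, and matching codimensions identifies the codimension-$k$ piece of the multidegree class as $G_k = c_k$.

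The last step is a purely formal substitution in the defining expression
\[
\gamma_\varphi(t) \;=\; \sum_{\ell\ge 0} \bigl(h^{\dim V - \ell}\cdot G_\ell\bigr)\,t^\ell\quad .
\]
Combining the two facts above,
\[
h^{\dim V - n}\, t^n\, P_{M'_\varphi(X)}\!\bigl(h/t\bigr)
\;=\;
h^{\dim V - n + 1}\, t^{n-1}\, P_{M''_\varphi(X)}\!\bigl(h/t\bigr)
\;=\;
\sum_{k=0}^{n-1} h^{\dim V - k}\,c_k\,t^k\quad ,
\]
which is precisely $\gamma_\varphi(t)$. (For $\dim V < n-1$, the components $G_k$ of excess codimension vanish automatically, so the right-hand side is still well defined.)

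There is no serious obstacle here; the proof is essentially bookkeeping. The one structural point worth highlighting is that the zero row of $M'_\varphi(X)$ forced by the pivot choice supplies exactly the extra factor of $t$ needed to pass from the $(n-1)\times(n-1)$ matrix $M''_\varphi(X)$ of Corollary~\ref{charact} to the $n\times n$ matrix $M'_\varphi(X)$ of the statement, and this is why the prefactor in~\eqref{eq:chargen} is $h^{\dim V - n}$ rather than $h^{\dim V - n + 1}$. Theorem~\ref{torimd} then falls out as the special case $V = \Pbb^{n-1}$, $X_j = h$, as already checked there.
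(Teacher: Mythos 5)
Your proposal is correct and follows essentially the same route as the paper: both factor $P_{M'_\varphi(X)}(t)=t\,P_{M''_\varphi(X)}(t)$ using the zero pivot row, invoke Corollary~\ref{charact} (via Lemma~\ref{Qaschar}) to identify the graded pieces of the characteristic polynomial with the codimension components $G_\ell$ of the multidegree class, and then recover $\gamma_\varphi(t)$ by the formal substitution with $h$ and $t$. The paper's proof is just a terser version of this same bookkeeping.
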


Identity \eqref{eq:chargen} should be interpreted by computing the right-hand side formally; 
this yields an expression in $t$ whose coefficients are homogeneous polynomials of degree 
$\dim V$ in the variables $h,X_1,\dots, X_n$. The statement is that evaluating the products as 
intersection products of the corresponding classes in $V$ gives the multidegree polynomial 
of $\varphi$.

\begin{proof}
The characteristic polynomial of $M'_\varphi(X)$ equals $t\, P_{M''_\varphi(X)}(t)$, and 
it follows from Lemma~\ref{Qaschar} and Corollary~\ref{charact} that
\[
P_{M'_\varphi(X)}(t)= t^n G_0 + t^{n-1} G_1 + \cdots + t G_{n-1}\quad,
\]
where $G_\ell$ is the term of codimension~$\ell$ in $G$. Since
\[
\gamma_\varphi(t) = h^{\dim V}\cdot G_0 + h^{\dim V-1}\cdot G_1 t+\cdots
\]
the stated formula follows by formal manipulations of these expressions.
\end{proof}

\begin{example}\label{tria3}
Returning to the case in Example~\ref{tria}, $\varphi$ is well-presented, as the reader
can easily verify. The matrices $M_\varphi$, $M'_\varphi(X)$ are
\[
\begin{pmatrix}
0 & 1 & 2 \\
2 & 0 & 2 \\
3 & 1 & 1
\end{pmatrix}
\quad,\quad
\begin{pmatrix}
-3X_1 & 0 & X_3 \\
-X_1 & -X_2 & X_3 \\
0 & 0 & 0
\end{pmatrix}
\quad.
\]
We have $\det(t\, I-M'_\varphi(X))=t(t+3X_1)(t+X_2)$. According to 
Theorem~\ref{charpolmd}, 
\[
\gamma_\varphi(t) = h^{2-3} t^3 \frac ht\left(\frac ht+3X_1\right) \left(\frac ht+X_2\right)
=h^2 + (3 h\cdot X_1+h\cdot X_2)\,t+3X_1 X_2 \,t^2
\]
Since $X_1=h$ and $\deg X_2=2h$ in this example, this recovers the result
$\gamma_\varphi(t)=1+5\,t+6\,t^2$ for the multidegree polynomial of $\varphi$, 
in agreement with the computations performed in~Examples~\ref{tria} and~\ref{tria2}.
\qede\end{example}

%%%

%%%

\end{document}